\def\ra{\rightarrow}
\def\adhe{\mathrm{Clos}}
\def\inte{\mathrm{Int}}
\newcommand{\bbR}{{\mathbb{R}}}
\newcommand{\bbQ}{{\mathbb{Q}}}
\newcommand{\bbC}{{\mathbb{C}}}
\newcommand{\bbZ}{{\mathbb{Z}}}
\def\cC{{\cal C}}
\def\?{$^{***}$\marginpar{?}}
\newtheorem{theo}{Theorem}
\newtheorem*{theo*}{Theorem}
\newtheorem{prop}[theo]{Proposition}
\newtheorem*{conj*}{Conjecture}
\newtheorem{lemma}[theo]{Lemma}
\newtheorem{claim}[theo]{Claim}
\theoremstyle{definition}
\newtheorem{defi}[theo]{Definition}
\newtheorem{rema}[theo]{Remark}
\newtheorem*{rema*}{Remark}
\newtheorem*{remas*}{Remarks}
\author{Fr\'ed\'eric Le Roux \\
Universit\'e Paris-Sud, Bat. 425 \\
91405 Orsay Cedex   FRANCE   
}
\date{}
\begin{document}
\sloppy

\title{A topological characterisation of holomorphic parabolic germs in the plane}

\maketitle

\begin{abstract}
In~\cite{GP}, Gambaudo and P\'ecou introduced  the ``{linking property}''
to study the dynamics of germs of planar homeomorphims and provide a new proof of Naishul theorem.
In this paper we prove that the negation of Gambaudo-P\'ecou property characterises the topological dynamics of holomorphic parabolic germs.
As a consequence, a rotation set for germs of surface homeomorphisms around a fixed point can be defined, 
and it will turn out to be non trivial except for countably many conjugacy classes.
\end{abstract}

\section{Introduction}
\def\hp{{\cal H}^+}
Let $\hp$ be the set of  orientation preserving homeomorphisms of the plane that fix $0$, and let $h \in \hp$.
We are interested in the dynamics of the germ of $h$ at $0$.
Imagine one wants to evaluate the ``amount of rotation'' in a neighbourhood $V$  of $0$ by looking at the way the orbit of some point $x \in V$ rotates around  $0$. Then two kinds of difficulties can arise:
\begin{itemize}
\item if the orbit of $x$ leaves $V$ after a small number of iterations, then 
the behaviour of $x$ is not significant with respect to the local dynamics;
\item if the orbit of $x$ tends to the fixed point $0$, then the rotation of $x$ around $0$
is not significant either, because it is not invariant under a continuous change of coordinates. 
\end{itemize}
These difficulties has lead Gambaudo and P\'ecou to the statement of the ``{linking property}'' (see~\cite{GP,Pe}) which demands that inside each neighbourhood of $0$ there exist arbitrarily long segments of orbits starting and ending not too close to $0$.
In this paper we prove that the only germs that do not share the linking property are  the contraction, dilatation and holomorphic parabolic germs. 
To be more precise, let us define the \emph{short trip property}, which is the negation of Gambaudo-P\'ecou property, as follows.

\begin{defi}\label{def.property}
Let  $f\in \hp$. 
We say that $f$ satisfies the \emph{short trip property} if 
there exists a neighbourhood $V$ of the fixed point $0$, such that  for every 
neighbourhood $W$ of $0$,
there exists an integer $N_{W} >0$ such that
for every segment of orbit $(x,f(x), \dots, f^n(x))$ which is included in  $V$, 
and whose endpoints $x, f^n(x)$ are outside $W$, the length $n$ is less than $N_{W}$.
\end{defi}

\pagebreak[1]
Two homeomorphisms $f_{1},f_{2} \in \hp$ are said to be \emph{locally topologically conjugate} if there exists a homeomorphism $\varphi \in \hp$ such that the relation $f_{2} = \varphi f_{1} \varphi^{-1}$ holds on some neighbourhood of $0$.
We are interested in the local dynamics near the fixed point $0$, thus we consider 
maps up to  local conjugacy.
Note that any local homeomorphism locally coincides with a  homeomorphism defined on the whole plane, so that working with globally defined homeomorphisms is just a matter of convenience and does not alter the results (see~\cite{Ham} or~\cite{LR1}, chapitre 2). As a consequence, to prove that two homeomorphisms are locally topologically conjugate it suffices to construct the conjugacy on a neighbourhood of $0$.
\begin{defi}
Let $f \in \hp$, and identify the plane with the complex plane $\bbC$. We say that $f$ is a \emph{locally holomorphic parabolic homeomorphism} (or just \emph{parabolic})
if $f$ is holomorphic on some neighbourhood of $0$, $f'(0)$ is a root of unity,
and  for every positive  $n$ the map   $f^n$ is not locally equal to the identity. 
\end{defi}
Note that the hypothesis on $f'(0)$ amounts to saying that the differential of $f$ is a rational rotation, and  then the last hypothesis is equivalent to saying that $f$ is not locally topologically conjugate to its differential. According to Camacho version of Leau-Fatou theorem, if $f \in \hp$ is  parabolic, then $f$ is locally topologically conjugate to some map
 $$
 z \mapsto e^{2i\pi \frac{p}{q}}z(1 + z^{qr}), \ \ \ \mbox{ with } \frac{p}{q} \in \bbQ, q,r\geq1.
 $$
 See ~\cite{Cam}, and figure~\ref{fig1}.

\def\JPicScale{1}
\begin{figure}[htbp]
\begin{center}
\input{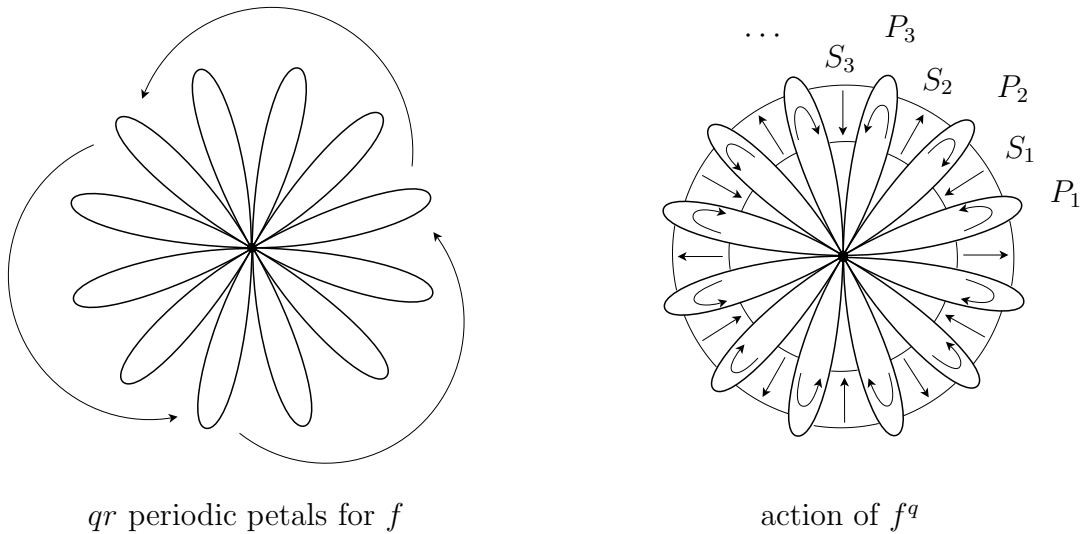}
\caption{Local topological dynamics of $f:z \mapsto e^{2i\pi \frac{p}{q}}z(1 + z^{qr})$; here $q=3$, $p=1$, $r=2$, so that there is two orbits of attracting petals and two orbits of repulsive petals}
\label{fig1}
\end{center}
\end{figure}

We can now state our theorem. 
\begin{theo}\label{theo.main}
Let  $f$ be an orientation preserving homeomorphism of the plane that fixes the point $0$. 
Then $f$ has the short trip property if and only if it is locally topologically conjugate to the  contraction $z \mapsto \frac{1}{2}z$, to the dilatation $z \mapsto 2z$, or
to a locally holomorphic parabolic homeomorphism.
\end{theo}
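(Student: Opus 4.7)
The plan is to prove the two directions separately. For the easy direction, I would verify the short trip property directly on each of the three model classes. For $z\mapsto z/2$, an orbit segment of length $n$ in a disk of radius $R$ with endpoints outside a disk of radius $r$ satisfies $r\le|x|/2^n\le R/2^n$, giving $n\le \log_2(R/r)$; the dilatation is symmetric. For a parabolic germ in Camacho's normal form $z\mapsto e^{2i\pi p/q}z(1+z^{qr})$, I would take $V$ to be a fundamental petal neighborhood (as depicted in figure~\ref{fig1}); any orbit segment in $V$ lies in a single attracting petal (iterates converging monotonically to $0$), in a single repelling petal, or crosses only boundedly many petals before leaving, and in each case the number of iterates outside a given $W$ is bounded.

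For the hard direction, assume $f$ satisfies the short trip property with witness $V$. The first step is a structural observation: any $x\in V$ whose forward orbit stays in $V$ must satisfy $f^n(x)\to 0$. Otherwise some subsequence remains outside a fixed small $W$, and concatenating subsegments between such returns yields arbitrarily long orbit arcs in $V$ with both endpoints outside $W$, contradicting the short trip property. Define $\Omega^s\subset V$ as the set of such points and $\Omega^u$ analogously for $f^{-1}$. Three cases arise: $\Omega^s$ contains a full neighborhood of $0$ (attractor), $\Omega^u$ does (repeller), or neither.

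In the attractor case, $0$ is a topological sink with basin containing a neighborhood of $0$; a fundamental annulus $\adhe(V)\setminus f(\inte(V))$ tiles a punctured neighborhood under the iterates of $f$, and a Ker\'ekj\'art\'o-type classification of homeomorphisms of the annulus gives a conjugacy to $z\mapsto z/2$. The repeller case is symmetric by passing to $f^{-1}$. In the mixed case, both $\Omega^s$ and $\Omega^u$ contain points arbitrarily close to $0$. The key structural claim is that, after shrinking $V$, the connected components of $\Omega^s\cap V$ accumulating at $0$ are finite in number and equal in count to those of $\Omega^u\cap V$; these are the topological attracting and repelling petals. Finiteness would follow by combining the short trip property with Brouwer's plane translation theorem to preclude an infinite family of disjoint wandering regions near $0$. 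The map $f$ then permutes the attracting petals cyclically with some period $q$ and multiplicity $r$, and an associated rotation number $p/q$ emerges as a topological invariant (this is precisely Naishul's theorem in the guise made available by the Gambaudo--P\'ecou linking framework).

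The main obstacle is to construct, once the combinatorics $(p,q,r)$ is extracted, a genuine conjugacy to the Camacho model. Inside each petal, Brouwer theory rules out recurrence and yields a topological Fatou coordinate in which $f$ acts as a translation; this part is fairly standard. The delicate step is the gluing along the separatrix rays bounding adjacent attracting and repelling petals: one must produce these common boundary curves (the topological analogues of stable manifolds of the parabolic petal endpoints), show each meets the petal in a single orbit, and ensure the local translation-coordinate matchings are globally compatible with the cyclic action of $f$. I expect this topological analogue of the Leau--Fatou flower construction to absorb most of the paper's technical work, since one is replacing all of the holomorphic rigidity by hypotheses derived from the short trip property alone.
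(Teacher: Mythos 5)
Your trichotomy (sink, source, mixed) matches the paper's Lemma~\ref{lem.orbits}, and your plan in the mixed case---extract invariant petals, get translation coordinates inside them by a K\'er\'ekj\'art\'o/Brouwer-type argument, then glue---is the right shape. But the proposal has real gaps exactly where the paper's work is concentrated.

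First, the candidate petals live in the wrong place: you look at components of $\Omega^s\cap V$ accumulating at $0$, but a petal is an \emph{invariant} set, so it must lie in $\Omega^s\cap\Omega^u$. The paper works with connected components of $W^s(V)\cap W^u(V)\setminus\{0\}$; components of $W^s$ alone are candidates for attracting \emph{sectors}, not petals. Second, your finiteness argument (``Brouwer's plane translation theorem to preclude an infinite family of disjoint wandering regions near $0$'') is stated as a hope, not a proof, and is not how the paper obtains finiteness. The paper's route is different and more concrete: item~3 of Lemma~\ref{lem.orbits} gives $D\subset W^s(V)\cup W^u(V)$, so by compactness $\partial D$ decomposes into $2\ell$ arcs alternately in $W^s(V)$ and $W^u(V)$, with $\ell<\infty$ automatically; one then takes $\ell$ minimal over all such $D$. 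The hardest step---Lemma~\ref{lem.closure}, showing that each junction component $\cC_i$ of $W^s(V)\cap W^u(V)\setminus\{0\}$ has $0$ in its closure---is a purely plane-topological separation argument (Newman's theorem on separating components of $\partial D_k\cap\inte(D)$, passing to a Hausdorff limit $F$, and a contradiction with minimality of $\ell$). Nothing in your proposal replaces this. Third, you do not address periodicity: there is no reason the components $\cC_i$ should be $f$-periodic, and the paper needs the nontrivial device of enlarging to components $\cC'_i$ of $W^s(V')\cap W^u(V')\setminus\{0\}$ for a strictly larger $V'$, then proving periodicity by a limit-point/openness argument. Finally, rather than extracting a rotation triple $(p,q,r)$ directly (which would be circular, since invariance of the rotation data is downstream of this theorem), the paper passes to $F=f^{n_0}$ fixing each $\cC'_i$, proves $F$ is conjugate to $z\mapsto z(1+z^\ell)$ via Proposition~\ref{prop.carac1} and Lemma~\ref{lemma.carac1bis}, and then uses the separate Proposition~\ref{prop.carac2} to descend to $f$. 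In short: the skeleton is recognisable, but the two pivotal lemmas (accumulation at $0$ and periodicity) are missing, and what you offer in their place would not close the argument.
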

As a consequence there are only  countably many conjugacy classes missing to satisfy Gambaudo-P\'ecou property.

In order to explain where theorem~\ref{theo.main} comes from, let us first discuss Naishul theorem.
In~\cite{GP} it was shown that Gambaudo-P\'ecou property holds when $f$ preserves area,
and then this property is used to prove Naishul theorem: \emph{among area preserving homeomorphisms fixing $0$ that are differentiable at $0$ and whose differential is a rotation, the angle of the rotation is invariant under a local topological conjugacy}.
Then  the following nice generalisation of Naishul theorem is given by Gambaudo, Le Calvez and P\'ecou in~\cite{GLP}.
As a generalisation of differentiability at $0$, they consider the homeomorphisms $f$ for which the fixed point can be ``blown-up'',
\emph{i.e.} replaced by an ideal  circle in such a way that $f$ can be extended to a circle homeomorphism (see the precise definition in~\cite{GLP}). They prove that for such homeomorphisms, \emph{the Poincar\'e rotation number of the extended circle homeomorphism is invariant under a local topological conjugacy, unless $f$ is a contraction or a dilatation}.
The strategy of their proof is the following.
If $f$ has Gambaudo-P\'ecou property, then one can use  the arguments in~\cite{GP}.
Now assume that  $f$ is \emph{indifferent}, that is, $f$ admits arbitrarily small non-trivial invariant compact connected sets $K$ containing $0$; then one can  use Caratheodory prime ends theory to associate a circle homeomorphism $f_{K}$ to each such $K$,  and use  the rotation number of $f_{K}$ to prove the topological invariance.
 Then one proves a last lemma asserting that \emph{a germ which is not indifferent and does not have Gambaudo-P\'ecou property  must be a contraction or a dilatation}.

As a consequence of Leau-Fatou theorem,  parabolic maps are indifferent. Thus theorem~\ref{theo.main} is a generalisation of this last  lemma.
Furthermore, it  provides an alternative proof of the generalised Naishul theorem avoiding the use of prime-ends, as follows:
we keep the arguments in~\cite{GP} to tackle homeomorphisms with Gambaudo-P\'ecou property;
then, in view of theorem~\ref{theo.main}, it only remains to deal with parabolic homeomorphisms, for which the proof is easy because
the local dynamics is completely understood.

More generally, in~\cite{LR2} we will  define a local rotation set for any homeomorphism $f$ in $\hp$.  This set is a subset of the line modulo integer translation, and it is a local topological conjugacy invariant.
Then  theorem~\ref{theo.main} will entail that the local rotation set  is non void as soon as $f$ does not fall into the countably many conjugacy classes described by the theorem.

One can also think of theorem~\ref{theo.main} as a local analogue of previous results
showing that a simple dynamical property can imply a strong rigidity.
The most striking result here is probably Hiraide-Lewowicz theorem  that an expansive homeomorphism on a compact surface is conjugate to a pseudo-Anosov homeomorphism (see~\cite{Hi,Le}).
Closer to our setting, K\'er\'ekj\'art\'o has shown that an orientation preserving homeomorphism of a closed orientable surface whose singular set is totally disconnected is topologically conjugate
to a conformal transformation (see~\cite{BK,Ke,Ke3}).
Thus, for instance, an orientation preserving homeomorphism $f$ of the plane is conjugate to a translation if and only if it has no fixed point and the family $(f^n)_{n \geq 0}$ is equicontinuous at each point for the spherical metric.

In some sense, theorem~\ref{theo.main} highlights that it is easy to be locally conjugate to 
a locally parabolic homeomorphism: a homeomorphism that ``looks like'' a parabolic map will be conjugate to it.
In contrast, the examples given in~\cite{BLR}
reveal how difficult it is to be conjugate to the saddle homeomorphism $(2x,y/2)$, and in particular that it is not enough to preserve the hyperbolic foliation. 
A topological characterisation can be given, but it must take into account the sophisticated  \emph{oscillating set} (see the remark on fig.~3 in~\cite{BLR}, as well as  part III).

\section{Dynamics of parabolic germs}
Properties~\ref{prop.carac1} and~\ref{prop.carac2} below provide a first (classical) characterisation of parabolic germs in terms of \emph{attracting}  and \emph{repulsive sectors} and \emph{invariant petals}.

\subsection{Contractions and attracting sectors}
We begin by characterising the dynamics of contractions. 
Then we describe \emph{attracting sectors}.
Of course, similar results hold for dilatation and \emph{repulsive sectors}, although we will not state them explicitly.

Let $f\in\hp$. We will say that a sequence $(E_{n})_{n \geq 0}$ of subsets of the plane  \emph{converges to $0$} if for every $W$ neighbourhood of $0$, all but finitely many terms of the sequence are included in $W$. The following result is very classical.
\begin{prop}\label{prop.contraction}
Let $f \in \hp$. Let $D$ be a topological  closed disc\footnote{A \emph{topological closed disc} is a set homeomorphic to the closed unit disc.} 
 which is a neighbourhood of $0$,
and suppose that the orbit $(f^n(D))_{n \geq 0}$  converges to $0$.
Then $f$ is locally topologically conjugate to the contraction $z \mapsto \frac{1}{2}z$.
\end{prop}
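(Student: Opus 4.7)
The approach is the classical fundamental-domain construction. My first task would be to reduce to the situation where $D$ is itself a \emph{trapping disc}, meaning $f(D) \subset \mathrm{int}(D)$. Granted this, the region $A := D \setminus \mathrm{int}(f(D))$ is a closed topological annulus (by Schoenflies, applied to the two nested Jordan curves $\partial D$ and $\partial f(D)$) and serves as a fundamental domain for the action of $f$ on $D \setminus \{0\}$: the forward iterates $f^k(A)$, $k \geq 0$, are pairwise disjoint modulo their shared boundaries, and they cover $D \setminus \{0\}$ because $\bigcap_k f^k(D) = \{0\}$, which is immediate from the hypothesis $f^n(D) \to 0$. The model map $c(z) = z/2$ decomposes the closed unit disc $\bbD$ in the same way, with fundamental annulus $A_0 = \{1/2 \le |z| \le 1\}$.

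I would then build $\varphi$ in stages. Pick any orientation-preserving homeomorphism $\alpha \colon \partial D \to \{|z|=1\}$ and impose the equivariance rule on the inner boundary by setting $\beta(f(x)) := \alpha(x)/2$ for $x \in \partial D$, so that $\beta \colon \partial f(D) \to \{|z|=1/2\}$. Because $f$ is orientation preserving, $\alpha$ and $\beta$ induce matching orientations on the two boundary circles of $A$, and the standard plane-topology fact that any pair of orientation-preserving boundary homeomorphisms of a closed topological annulus extends over the annulus yields a homeomorphism $\varphi_0 \colon A \to A_0$ agreeing with $\alpha$ and $\beta$ on $\partial A$. Now propagate equivariantly: for $y = f^k(x)$ with $x \in A$ and $k \geq 0$, set $\varphi(y) := \varphi_0(x)/2^k$. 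This is well-defined and continuous across each seam $f^{k+1}(\partial D) = f^k(\partial f(D))$ precisely because $\varphi_0 \circ f = c \circ \varphi_0$ already holds on $\partial D$. Finally set $\varphi(0) := 0$; continuity at $0$ follows from $f^k(A) \to 0$ and $c^k(A_0) \to 0$. The resulting $\varphi$ is a continuous bijection between two compact discs, hence a homeomorphism, and it conjugates $f$ to $c$ on the neighbourhood $D$ of $0$ by construction.

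The only non-routine step is the preliminary reduction to a trapping disc. The hypothesis only delivers some $N$ with $f^N(D) \subset \mathrm{int}(D)$, and one cannot in general shrink $D$ to a small round ball around $0$ to make $f$ trapping, since $f$ might behave locally like a rotation composed with a contraction. A natural remedy is to take a small closed disc $B \subset \mathrm{int}(D)$ around $0$ whose first $N$ iterates are all contained in $\mathrm{int}(D)$, form the compact connected set $K := \bigcup_{k=0}^{N-1} f^k(B)$, and replace $D$ by a careful regular thickening of $K$ inside $\mathrm{int}(D)$ chosen small enough to be sent strictly inside itself by $f$. Conceptually this is just the standard fact that an asymptotically stable fixed point admits a fundamental system of trapping closed-disc neighbourhoods, but carrying it out purely in the topological category is where some genuine effort is required; this is the step I would expect to be the main obstacle.
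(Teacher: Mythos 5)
You correctly break the argument into (1) reducing to a \emph{trapping} disc $D'$ with $f(D') \subset \inte(D')$ and (2) the fundamental-annulus construction, and your sketch of (2) is sound and agrees with the paper's (which leaves that part as a one-line remark). The problem is with (1), which you rightly flag as the heart of the matter but leave as a ``careful regular thickening'' of the compact connected set $K = \bigcup_{k<M} f^k(B)$. That thickening does not come for free in the topological category: a small metric neighbourhood of a compact connected planar set need not be simply connected, and even after filling in the bounded complementary components its boundary need not be a Jordan curve, so Schoenflies does not automatically make the closure a topological disc. In other words, the existence of a fundamental system of trapping \emph{disc} neighbourhoods is exactly what has to be proved here, not a standard fact one can cite.

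The paper sidesteps the thickening issue by producing the trapping set directly in a form to which a disc-recognition theorem applies. It chooses nested closed discs $D = D_0 \supset D_1 \supset \cdots \supset D_{n-1}$ with $\inte(D_i) \supset D_{i+1}$ and $\inte(D_{n-1}) \supset f^n(D_0)$, forms the finite intersection of open topological discs
\[
O = \inte(D_{n-1}) \cap \inte\bigl(f(D_{n-2})\bigr) \cap \cdots \cap \inte\bigl(f^{n-1}(D_0)\bigr),
\]
and takes $U$ to be the connected component of $O$ containing $0$. The nesting gives $\adhe(f(U)) \subset U$, and a theorem of K\'er\'ekj\'art\'o (\cite{Ke2,LCY}) on components of finite intersections of Jordan domains guarantees that $D' := \adhe(U)$ is a closed topological disc with $f(D') \subset \inte(D')$. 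So the step you defer is precisely the one that carries the weight of the proposition, and making it rigorous requires this kind of plane-topology input rather than an ad hoc thickening.
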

\begin{proof}
By hypothesis there exists $n>0$ such that $f^n(D) \subset \inte(D)$.
Choose some decreasing finite sequence of topological closed discs $D_{i}$
with $D_{0}=D$, $\inte (D_{i}) \supset D_{i+1}$, and $\inte (D_{n-1}) \supset f^n(D_{0})$.
 Consider the set
 $$
 O = \inte \left(D_{n-1}\right) \cap \inte \left( f(D_{n-2}) \right) \cap \cdots \cap \inte \left( f^{n-1}(D_{0}) \right).
 $$
 Let $U$ be the connected component of $O$ containing the fixed point $0$.
The hypotheses on the $D_{i}$'s entail that $\adhe(f(O)) \subset O$.
Since $\adhe(f(U))$ is connected and contains $0$, we deduce that $\adhe(f(U)) \subset U$.
Furthermore, according to a theorem of K\'er\'ekj\'art\'o,
the set $D'=\adhe(U)$ is a closed topological disc (see~\cite{Ke2, LCY}). This disc satisfies 
 $f(D') \subset \inte (D')$.
 
Now the annulus $D' \setminus \inte(f(D'))$ is a ``fundamental domain'' for $f$, and can be used to construct a local topological conjugacy
between $f$ and the contraction.
%
%
\end{proof}

We will say that two sets $S$ and $S'$ \emph{coincides in a neighbourhood of $0$}, or \emph{have the same germ at $0$}, and we will write $S \overset{0}{=} S'$,
if there exists a neighbourhood  $V$ of $0$ such that $S \cap V = S' \cap V$.
\begin{defi}[see figure~\ref{fig2}]
An \emph{attracting sector} is a topological closed disc $S$ whose boundary contains $0$, which coincides in a neighbourhood of $0$ with its image $f(S)$, and 
whose orbit $(f^n(S))_{n \geq 0}$ converges to $0$.
The attracting sector is said to be \emph{nice} if $f(S) \subset S$ and $S \setminus f(S)$ is connected.
A \emph{(nice) repulsive sector} is a (nice) attracting sector for $f^{-1}$.
\end{defi}

\def\JPicScale{1}
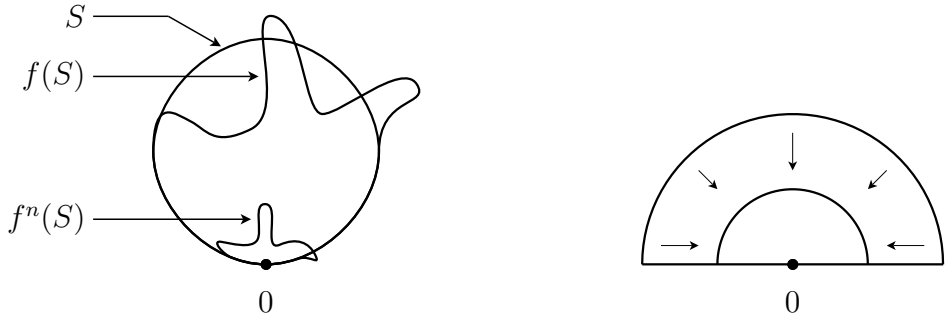
\begin{figure}[htbp]
\begin{center}
\ifx\JPicScale\undefined\def\JPicScale{1}\fi
\psset{unit=\JPicScale mm}
\psset{linewidth=0.3,dotsep=1,hatchwidth=0.3,hatchsep=1.5,shadowsize=1}
\psset{dotsize=0.7 2.5,dotscale=1 1,fillcolor=black}
\psset{arrowsize=1 2,arrowlength=1,arrowinset=0.25,tbarsize=0.7 5,bracketlength=0.15,rbracketlength=0.15}
\begin{pspicture}(0,0)(120,44)
\psline(80,10)(120,10)
\rput{0}(100,10){\parametricplot[arrows=-]{0}{180}{ t cos 20 mul t sin 20 mul }}
\rput{0}(100,10){\parametricplot[arrows=-]{0}{180}{ t cos 10 mul t sin 10 mul }}
\psline[linewidth=0.1]{->}(100,27.5)(100,22.5)
\psline[linewidth=0.1]{->}(87.5,22.5)(90,20)
\psline[linewidth=0.1]{->}(112.5,22.5)(110,20)
\psline[linewidth=0.1]{->}(82.5,12.5)(87.5,12.5)
\psline[linewidth=0.1]{->}(117.5,12.5)(112.5,12.5)
\psdots[](100,10)
(100,10)
\rput(100,5){$0$}
\psdots[](30,10)
(30,10)
\rput(30,5){$0$}
\rput[r](6,43){$S$}
\rput[r](6,35){$f(S)$}
\rput[r](6,16){$f^n(S)$}
\pscustom[]{\psbezier(30,10)(37.5,10)(45,17.5)(45,25)
\psbezier(45,25)(45,32.5)(37.5,40)(30,40)
\psbezier(30,40)(22.5,40)(15,32.5)(15,25)
\psbezier(15,25)(15,17.5)(22.5,10)(30,10)
\closepath}
\pscustom[]{\psbezier(30,10)(37.5,10)(45,17.5)(45,25)
\psbezier(45,25)(45,32.5)(52,31)(50,34)
\psbezier(50,34)(48,37)(40,30)(37.5,30)
\psbezier(37.5,30)(35,30)(35,42)(31,43)
\psbezier(31,43)(27,44)(33,30.5)(28,28)
\psbezier(28,28)(23,25.5)(22,28)(20,29)
\psbezier(20,29)(18,30)(15,32.5)(15,25)
\psbezier(15,25)(15,17.5)(22.5,10)(30,10)
\closepath}
\pscustom[]{\psbezier(30,10)(32,10)(33.1,10.4)(34.5,10.8)
\psbezier(34.5,10.8)(35.9,11.2)(37.3,9.7)(36.5,11.3)
\psbezier(36.5,11.3)(35.7,12.9)(34.1,13.6)(31.8,12.6)
\psbezier(31.8,12.6)(29.5,11.6)(31.87,17.74)(30,18)
\psbezier(30,18)(28.13,18.26)(29.6,14.4)(28.8,13.2)
\psbezier(28.8,13.2)(28,12)(25.3,12.8)(24,13)
\psbezier(24,13)(22.7,13.2)(24.2,11.4)(25.4,10.9)
\psbezier(25.4,10.9)(26.6,10.4)(28,10)(30,10)
\closepath}
\psline[linewidth=0.2]{->}(17,43)(24,39)
\psline[linewidth=0.2]{->}(7,35)(29,35)
\psline[linewidth=0.2]{->}(7,16)(28,16)
\psline[linewidth=0.2](17,43)(7,43)
\end{pspicture}
\caption{Attracting and nice attracting sectors}
\label{fig2}
\end{center}
\end{figure}

\pagebreak[3]
\begin{claim}~ \label{claim2}
\begin{enumerate}
\item If $S$ is an attracting sector  then there exists a nice attracting sector $S'$, included in $S$, and  having the same germ as $S$ at $0$.
\item   If $S'$ is a nice attracting sector for $f$, then there exists a homeomorphism $\Phi$ between $S'$ and the half-disc  $S_{0} = \{|z| \leq 1, y \geq 0\}$
such that the conjugacy relation $\Phi f  = \frac{1}{2} \Phi$ between $f$ and the contraction $z \mapsto \frac{1}{2}z$ holds on $S'$.
\end{enumerate}
\end{claim}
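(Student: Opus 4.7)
I would fix a neighbourhood $V$ of $0$ with $S \cap V = f(S) \cap V$. The boundary $\partial S$ accumulates on $0$ along two arcs $L^-, L^+ \subset V$; since $f$ is orientation preserving and $f^n(S)\to 0$, the map $f$ sends points of $L^{\pm}$ close enough to $0$ along the same arc $L^{\pm}$ strictly closer to $0$. The goal is to build a simple arc $\alpha \subset S \cap V$ joining a point of $L^+$ to a point of $L^-$ and lying (except at its endpoints) in $\mathrm{int}(S)$, such that $\alpha$ cuts off from $S$ a closed sub-disc $S'$ containing $0 \in \partial S'$, with the additional properties $\alpha \cap f(\alpha) = \emptyset$ and $f(\alpha) \subset \mathrm{int}(S')$. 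Such an $\alpha$ can be produced by first choosing any small initial arc $\alpha_0$ that separates a small neighbourhood of $0$ in $S$, and then taking $\alpha := f^k(\alpha_0)$ for $k$ large enough that $\alpha$ and $f(\alpha)$ both lie in an arbitrarily small neighbourhood of $0$ (with a local surgery afterwards if needed to enforce disjointness). Once $\alpha$ is in hand, $f(S') \subset S'$ is immediate, and $S' \setminus f(S')$ is bounded by $\alpha$, $f(\alpha)$, and the two subarcs of $\partial S$ joining their endpoints along $L^{\pm}$, hence is homeomorphic to a disc and in particular connected, so $S'$ is a nice attracting sector.

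\textbf{Plan for (2).} Given a nice attracting sector $S'$, the closure $F := \overline{S' \setminus f(S')}$ is a topological closed disc (a ``fundamental domain'' for $f$ on $S'$), and correspondingly $F_0 := \{z \in S_0 : \tfrac{1}{2} \leq |z| \leq 1\}$ is the fundamental domain of $z \mapsto z/2$ in the model $S_0$. I would build a homeomorphism $\Phi_F : F \to F_0$ sending the ``outer boundary'' $\alpha$ to the outer half-circle $\{|z|=1,\, y \geq 0\}$, the ``inner boundary'' $f(\alpha)$ to the inner half-circle $\{|z|=\tfrac{1}{2},\, y \geq 0\}$, and the two subarcs of $\partial S \cap F$ to the real segments $[-1,-\tfrac{1}{2}]$ and $[\tfrac{1}{2},1]$, arranged so that $\Phi_F \circ f|_\alpha = \tfrac{1}{2}\, \Phi_F|_\alpha$. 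Since $S' \setminus \{0\} = \bigcup_{n \geq 0} f^n(F)$, with adjacent fundamental domains $f^n(F)$ and $f^{n+1}(F)$ meeting exactly along $f^{n+1}(\alpha)$, the prescription $\Phi(f^n(x)) := 2^{-n}\, \Phi_F(x)$ for $x \in F$, $n \geq 0$, is unambiguous on overlaps thanks to the matching condition, and patches into a homeomorphism $\Phi : S' \setminus \{0\} \to S_0 \setminus \{0\}$ satisfying $\Phi f = \tfrac{1}{2}\Phi$. Setting $\Phi(0) := 0$ finishes the construction, with continuity at $0$ guaranteed by $f^n(S') \to 0$ and its model analogue $2^{-n} S_0 \to 0$.

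\textbf{Main obstacle.} The technical heart lies in part (1): ensuring rigorously that the cutting arc $\alpha$ and its image $f(\alpha)$ are disjoint and that $f(\alpha)$ lies on the $0$-side of $\alpha$. Since $\partial S$ near $0$ may be topologically wild, one cannot invoke transversality; one must argue purely topologically, exploiting that $f^k(\alpha_0) \to \{0\}$ forces successive iterates to become disjoint for $k$ large, possibly after a small surgery. Part (2), by contrast, is a standard fundamental-domain construction once the matching is pinned down on the single arc $\alpha$.
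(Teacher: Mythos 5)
Part (2) of your proposal matches the paper's approach exactly: the paper dispatches it with the one-line remark that the fundamental domain $\adhe(S' \setminus f(S'))$ does the job, and your gluing of translates of the fundamental domain is the standard unwinding of that remark.

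For part (1), however, there is a genuine gap at the exact place you flag as the ``technical heart.'' Your plan is to take an initial crosscut $\alpha_0$ of $S$ near $0$, set $\alpha := f^k(\alpha_0)$ for $k$ large, and then perform ``local surgery if needed.'' The iteration step buys you nothing toward disjointness of $\alpha$ and $f(\alpha)$: since $f$ is a homeomorphism, $f^k(\alpha_0) \cap f^{k+1}(\alpha_0) = f^k\bigl(\alpha_0 \cap f(\alpha_0)\bigr)$, so $\alpha$ meets $f(\alpha)$ if and only if $\alpha_0$ met $f(\alpha_0)$, no matter how large $k$ is. The surgery step must then carry the whole weight, and it is not merely ``local'': if you modify $\alpha$ to $\alpha'$ to clear an intersection with $f(\alpha)$, the image $f(\alpha')$ has also moved, so the condition $\alpha' \cap f(\alpha') = \emptyset$ is self-referential, and a naive perturbation does not resolve it. The paper sidesteps this trap with a different mechanism. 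It first fixes an arc $\alpha \subset \partial S$ through $0$ with $f(\alpha) \subset \inte(\alpha)$, introduces the invariant topological line $A = \bigcup_{n\geq 0} f^{-n}(\alpha)$, and shrinks $S$ to a sub-disc $D_0$ with $D_0 \cap A = \alpha$ so that the dynamics on the boundary is under control. Then, as in the proof of Proposition~\ref{prop.contraction}, it chooses a finite nested family of discs $D_i$ sharing the germ of $D_0$ at $0$ and containing $\alpha$, forms the open set $O = \inte(D_{n-1}) \cap \inte(f(D_{n-2})) \cap \cdots \cap \inte(f^{n-1}(D_0))$, lets $U$ be the distinguished component of $O$, and declares $S' = \adhe(U)$. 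The crucial ingredient --- that $\adhe(U)$ is a topological closed disc --- is K\'er\'ekj\'art\'o's theorem (\cite{Ke2, LCY}), and the inclusion $f(S') \subset U \cup f^n(\alpha)$, together with connectedness of $S' \setminus f(S')$, then falls out of the construction. Your write-up is missing both the intersection-of-iterates trick that defuses the self-reference and the appeal to K\'er\'ekj\'art\'o's theorem certifying that the resulting invariant region is actually a disc.
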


\begin{rema}\label{rema.sector}
Here are some easy consequences of item 2 of the claim.
\begin{enumerate}
\item The sets  $\Phi^{-1}([-1,0]), \Phi^{-1}([0,1])$ are called the 
\emph{sides} of the nice attracting sector; they do not depend  on the choice of $\Phi$.

\item There exists arbitrarily small nice attracting sectors within $S'$; moreover, any pair of points $x,y$ on both sides of $S'$
are the endpoints of the sides of some nice attracting sector included in $S'$.

\item Any homeomorphism $\Phi$ between the union of the sides of $S'$ and the segment $[-1,1]$,
satisfying the conjugacy relation $\Phi f  = \frac{1}{2} \Phi$, can be extended to a homeomorphism between $S'$ and $S_{0}$ conjugating $f$ and $z \mapsto \frac{1}{2}z$ as in item 2 of the claim.
\end{enumerate}
\end{rema}

\begin{proof}[Proof of claim~\ref{claim2}]
Let $S$ be an attracting sector.
It is easy to see that there exists an arc $\alpha$ included in the boundary of $S$, whose interior\footnote{The interior of a curve is defined to be the curve minus its endpoints.} $\inte(\alpha)$
 contains the fixed point $0$, and such that $f(\alpha) \subset \inte(\alpha)$.
Let us consider the set
$$
A:=\bigcup_{n \geq 0} f^{-n}(\alpha).
$$ 
This set is clearly a continuous one-to-one image of the real line, and $f(A)=A$.
By definition of an attracting sector, there exists an integer $n_{0}$ such that
for every $n \geq n_{0}$, the set $f^n(S)$ is disjoint from the compact set $f^{-1}(\alpha) \setminus \inte(\alpha)$.  
Then one has $A \cap S  = f^{-n_{0}}(\alpha) \cap S$.
In particular,  we can find a simple arc $\beta$ such that $\alpha \cup \beta$ is a Jordan curve included in $S$, and
whose intersection with $A$ is reduced to $\alpha$.

Let $D_{0}$ be the topological closed disc bounded by $\alpha \cup \beta$.
Then $D_{0}$ is included in $S$ and coincides with $S$ in a neighbourhood of $0$, 
and $D_{0} \cap A = \alpha$.
Note that for every $n$, $f^n(D_{0}) \cap A = f^n(D_{0} \cap A) = f^n(\alpha)$.
The disc $D_{0}$ is clearly an attracting sector.
 Let $n_{0}$ be a positive integer such that for every $n \geq n_{0}$,
$f^n(D_{0})$ does not meet $\beta$. Thus $f^n(D_{0})$ is included in $\inte(D_{0}) \cup f^n(\alpha)$.

We can now find a closed topological disc $S' \subset D_{0}$, having the same germ at $0$ as $D_{0}$,  which is a nice attracting sector for $f$.
 For this we can make a construction similar to the proof of proposition~\ref{prop.contraction}, with the following adaptations. Now we  choose the topological discs $D_i$'s having the same germ at $0$, containing $\alpha$, and such that 
  $\inte(D_{i}) \cup \{\alpha\} \supset D_{i+1}$ and   $\inte(D_{n-1}) \cup \{f^n(\alpha)\} \supset f^n(D_{0})$. The open set $U$ is defined to be the unique connected component of $O$ that has the same germ at $0$ than the $D_{i}$'s, and $S'$ is the closure of $U$.
  Then one has $f(S') \subset U \cup f^n(\alpha)$, and $f^n(S')$ contains $f^n(\alpha)$; and thus $S' \setminus f(S')$ is connected. 
  The details are left to the reader.

The proof of item 2 is straightforward by using the fundamental domain $\adhe(S' \setminus f(S'))$.
\end{proof}

\subsection{Regular invariant petals}
We first recall a theorem of K\'er\'ekj\'art\'o (\cite{Ke3}).
\begin{theo}[K\'er\'ekj\'art\'o] \label{theo.kerek}
Let $f$ be a homeomorphism of the plane that preserves the orientation, and suppose that for any compact set $K$ the orbit
$(f^n(K))_{n \geq 0}$ converges to the point at infinity in the sphere $\bbR^2 \cup\{\infty\}$.
Then $f$ is topologically conjugate to the translation $z \mapsto z+1$.
\end{theo}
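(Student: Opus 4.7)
The strategy is to recognise $f$ as a deck transformation on a quotient annulus, and then to transport a conjugacy via a lift. First I would show that the $\bbZ$-action generated by $f$ on $\bbR^2$ is free and properly discontinuous. Freeness is immediate: a periodic point $p$ would make $\{p\}$ a compact set whose orbit remains equal to $\{p\}$, contradicting the escape hypothesis. For proper discontinuity, note that $K \cap f^n(K) \neq \emptyset$ is the same condition as $f^{-n}(K) \cap K \neq \emptyset$, so it suffices to check that only finitely many $n \geq 0$ satisfy $K \cap f^n(K) \neq \emptyset$, which follows directly from $f^n(K) \to \infty$.

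Next I would identify the quotient. The orbit space $A := \bbR^2 / \langle f \rangle$ is then a topological surface whose universal cover is $\bbR^2$ and whose deck group is $\langle f \rangle \simeq \bbZ$; since $f$ preserves orientation, $A$ is orientable. By the classification of surfaces, the only connected orientable $2$-manifold without boundary having universal cover $\bbR^2$ and fundamental group $\bbZ$ is the open annulus $\bbS^1 \times \bbR$. The translation $T : z \mapsto z+1$ obviously satisfies the same hypothesis, and its quotient $A' := \bbR^2 / \langle T \rangle$ is likewise homeomorphic to $\bbS^1 \times \bbR$.

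Finally I would build the conjugacy by lifting. Pick any homeomorphism $h : A \to A'$ and, using simple connectedness of $\bbR^2$, lift it to a homeomorphism $\tilde h : \bbR^2 \to \bbR^2$ satisfying $\pi' \circ \tilde h = h \circ \pi$, where $\pi, \pi'$ denote the two quotient maps. Both $\tilde h \circ f$ and $T \circ \tilde h$ descend to $h$, hence differ by a deck transformation of $\pi'$: there exists $k \in \bbZ$ with $\tilde h \circ f = T^k \circ \tilde h$. As $h_*$ is an isomorphism of $\bbZ$, one has $k = \pm 1$; if $k = -1$, replacing $\tilde h$ by $z \mapsto -\tilde h(z)$ (an orientation-preserving plane homeomorphism that conjugates $T^{-1}$ to $T$) yields $k = 1$, and thus the desired conjugacy between $f$ and $T$.

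The main obstacle is the second step: identifying $A$ with the open annulus rests on the classification of surfaces, which is a somewhat heavy input for a statement of this flavour. A more hands-on alternative would be to apply Brouwer's plane translation theorem to get a translation arc $\alpha$ from a point $p$ to $f(p)$, form the line $L = \bigcup_{n \in \bbZ} f^n(\alpha)$ (which the escape hypothesis forces to be properly embedded, hence to split the plane into two half-planes preserved by $f$), and then construct the conjugacy directly from a fundamental domain bounded by $L$ and a suitable transversal; controlling what happens on both sides of $L$ is what makes this route noticeably more delicate than the quotient approach.
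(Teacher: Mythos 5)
Your proposal is correct and follows essentially the same route as the paper: pass to the orbit space $\bbR^2/\langle f\rangle$, identify it as the open annulus via the classification of open surfaces (the paper invokes the fact that a boundaryless surface with fundamental group $\bbZ$ is the infinite cylinder, citing Ahlfors--Sario), and transfer the resulting conjugacy back by lifting. Your write-up is more detailed than the paper's one-sentence sketch but contains the same ideas; the Brouwer-theoretic alternative you mention at the end is not what the paper does.
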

\begin{proof}[Sketch of proof]
By considering the space of orbits, the problem can be brought into the realm of the classification of surfaces:
then it follows from the fact that any surface without boundary, whose fundamental group is the group of integers, is homeomorphic to the infinite cylinder (see for example~\cite{AS}).
\end{proof}

\begin{defi}
An \emph{invariant petal for $f$}
is a topological closed disc  $P$ whose boundary contains the fixed point $0$, and such that $f(P)=P$. An invariant petal is called \emph{regular} if 
 for every compact set $K \subset P\setminus\{0\}$, the sequence $(f^n(K))_{n \geq 0}$ converges to $0$.
\end{defi}

\begin{rema}\label{rem.orientation}
If $P$ is a regular invariant petal then $f$ has no fixed point on the topological line $\partial P \setminus\{0\}$.
Thus we may endow this line with a \emph{dynamical order}  such that $f(x) >x$ for any point $x\neq0$ on $\partial P$. The petal will be called \emph{direct} if  this dynamical  order is compatible  with the topological (usual) orientation of $\partial P$ as a Jordan curve of the oriented plane
(for which the interior of $P$ is ``on the left'' of $\partial P$); in the opposite case it will be called \emph{indirect}.
\end{rema}

An adaptation of the proof of K\'er\'ekj\'art\'o  theorem yields the following.
\begin{claim}
\label{claim1}
Let $P$ be a regular  invariant petal for $f \in \hp$. If $P$ is direct then 
 the restriction $f_{\mid P}$
is topologically  conjugate, \emph{via} an orientation preserving homeomorphism,
to the restriction of the translation $z \mapsto z+1$ to the closed half-sphere  $\{ x+iy, y \geq 0  \} \cup \{\infty\}$ of the Riemann sphere $\hat \bbC$.
The same is true if $P$ is indirect with $z+1$ replaced by $z-1$.
\end{claim}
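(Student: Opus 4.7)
The plan is to adapt the proof of theorem~\ref{theo.kerek} to the setting with boundary by passing to the orbit space of $P \setminus \{0\}$ under the $\bbZ$-action generated by $f$. First I will check that this action is free and properly discontinuous; next I will identify the quotient as a half-open cylinder; finally I will lift a model isomorphism of orbit spaces to obtain the desired conjugacy, extended over~$0$.

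The first step rests on upgrading the hypothesis to a two-sided statement: for every compact $K \subset P \setminus \{0\}$, both $f^n(K)$ and $f^{-n}(K)$ converge to $0$ as $n \to +\infty$. Forward convergence is the regularity hypothesis. For backward convergence I would argue by contradiction: if $f^{-n_k}(x_k) \not\to 0$ with $x_k \in K$ and $n_k \to +\infty$, then up to subsequences $x_k \to x \in K$ and $f^{-n_k}(x_k) \to y \in P \setminus \{0\}$ (using compactness of $P$). On a compact neighborhood $D$ of $y$ in $P \setminus \{0\}$, forward regularity places $f^{n_k}(D)$ in any preassigned neighborhood of $0$ for $k$ large, so $x_k = f^{n_k}(f^{-n_k}(x_k)) \in f^{n_k}(D)$ becomes arbitrarily close to~$0$, contradicting $x_k \to x \neq 0$. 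Freeness then follows (a periodic orbit in $P \setminus \{0\}$ would be a compact set not converging to $0$), and proper discontinuity is immediate from the two-sided decay; hence $Q := (P \setminus \{0\})/f$ is a Hausdorff orientable surface with boundary.

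For the second step, $\partial P \setminus \{0\}$ is a topological line on which $f$ acts freely, with both ends accumulating at $0$ (via the dynamical order of remark~\ref{rem.orientation}), so $\partial Q$ is a single circle. Moreover $P \setminus \{0\}$, being a closed half-plane, is simply connected, so it is the universal cover of $Q$ and $\pi_{1}(Q) = \bbZ$. The classification of noncompact orientable surfaces-with-boundary with these invariants --- which plays the role that the classification of cylinders plays in the proof of theorem~\ref{theo.kerek}, and can be derived from \cite{AS} by doubling along the boundary --- then identifies $Q$ with $\bbS^{1} \times [0,\infty)$.

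The same two steps applied to the pair $(\{y \geq 0\}, z \mapsto z+1)$ give the same orbit space, so I obtain an orientation-preserving homeomorphism of orbit spaces, which lifts to an orientation-preserving $\Phi : P \setminus \{0\} \to \{y \geq 0\}$ conjugating $f$ to a deck generator $z \mapsto z \pm 1$; the sign is forced by whether $P$ is direct or indirect, via the compatibility of $\Phi$ with the induced boundary orientations and the dynamical order. Finally I set $\Phi(0) = \infty$; continuity at this point follows from the two-sided decay, since neighborhoods of $0$ in $P$ correspond via $\Phi$ to complements of compact sets in $P \setminus \{0\}$, which match the neighborhoods of $\infty$ in $\{y \geq 0\} \cup \{\infty\}$. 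I expect the main obstacle to be step two: pinning the orbit space down to $\bbS^{1} \times [0,\infty)$ rather than some other noncompact orientable surface with $\pi_{1} = \bbZ$ and one boundary circle requires a classification result going beyond the compact case; the direct/indirect orientation bookkeeping in the last step is a secondary but nontrivial matter.
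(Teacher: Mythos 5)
Your proof is correct and matches the paper's own approach: the paper gives no written proof of claim~\ref{claim1}, stating only that it follows from ``an adaptation of the proof of K\'er\'ekj\'art\'o theorem,'' i.e.\ the orbit-space and classification-of-surfaces argument sketched for theorem~\ref{theo.kerek}, which is exactly what you carry out. You have usefully supplied the details the paper leaves implicit, in particular the derivation of two-sided decay from the one-sided regularity hypothesis, the identification of the single boundary circle of the quotient, the reduction of the with-boundary classification to the boundaryless case by doubling, and the direct/indirect orientation bookkeeping.
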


\subsection{Characterisation of parabolic homeomorphisms}
We can now characterise the local dynamics of parabolic homeomorphisms.
For any set $D$ the \emph{maximal invariant set} of $D$ is the set $\cap_{n \in \bbZ}f^n(D)$ of points whose whole orbits are included in $D$.
\pagebreak[3]
\begin{prop}[see figure~\ref{fig1}]
\label{prop.carac1}
Let $f \in \hp$. Fix some integer $\ell \geq 1$. Then $f$ is locally topologically conjugate to $z \mapsto z(1+z^{\ell})$ if and only if there exists a neighbourhood of $0$ which is a topological closed disc $D$ called a \emph{nice disc}, such that
\begin{enumerate}
\item the maximal invariant set of $D$ is the union of  $2\ell$ regular invariant petals $P_{1}, \dots , P_{2\ell}$, whose pairwise intersections are reduced to $\{0\}$;
\item the sets $\partial D \cap P_{i}$ are connected,
and the cyclic order of these sets along $\partial D$ coincides with the order of the indices $i \in \bbZ/2\ell\bbZ$;
\item for every $i$, let $S_{i}$ be the closure of the connected component of $D \setminus \left(P_{1} \cup \dots \cup P_{2\ell}\right)$ meeting both $P_{i}$ and $P_{i+1}$, then $S_{i}$ is a nice attracting sector for odd $i$ and a nice repulsive sector for even $i$.
\end{enumerate}
\end{prop}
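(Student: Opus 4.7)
\emph{Overall plan.} I prove the two implications separately. The ``only if'' direction is a direct verification on the explicit model $f_0(z)=z(1+z^\ell)$: using the classical Leau--Fatou picture (as invoked above via~\cite{Cam}), I would identify the $\ell$ attracting axes, where $z^\ell=-1$, and the $\ell$ repulsive axes, where $z^\ell=1$, then carve out of a small round disc $D^0$ one regular attracting petal around each attracting axis and one regular repulsive petal around each repulsive axis, and finally check that the $2\ell$ complementary regions are precisely nice attracting and repulsive sectors alternating in the required cyclic order. Regularity of the petals and the alternating character of the sectors are read off from the explicit formula of~$f_0$.

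\emph{The ``if'' direction.} Suppose $f$ admits a nice disc $D$ as in the statement, and let $D^0$ be the nice disc just constructed for $f_0$, with petals $P_i^0$ and sectors $S_i^0$. The goal is to produce a homeomorphism $h:D\to D^0$ fixing $0$ with $hf=f_0 h$ on $D$; this will yield the local conjugacy. On each petal $P_i$, Claim~\ref{claim1} conjugates $f_{|P_i}$ to a translation, and likewise for $P_i^0$; composing these yields homeomorphisms $\varphi_i:P_i\to P_i^0$ intertwining the two dynamics. The direct or indirect types of $P_i$ and $P_i^0$ must match; this is forced by the cyclic order condition~(2) together with the alternating condition~(3), since the presence of, say, a nice attracting sector between $P_i$ and $P_{i+1}$ dictates, via the orientation of the plane and the sector's inward dynamics, the direction of motion on each of the two surrounding petal boundaries.

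\emph{Gluing.} For each sector $S_i$, I would invoke Claim~\ref{claim2}(2) to put on $S_i^0$ and on $S_i$ the model half-disc structure, and then use Remark~\ref{rema.sector}(3) to prescribe the conjugacy on the two sides of $S_i$ from the already-defined maps $\varphi_i$ and $\varphi_{i+1}$, and extend. For this extension to be valid, the prescribed boundary conjugacy must intertwine $f$ and $f_0$ and satisfy the compatibility $\Phi f=\frac{1}{2}\Phi$ on the sides. This requires choosing the fundamental domains in Claim~\ref{claim1} and in Claim~\ref{claim2} in a coherent way, so that the endpoints of the chosen fundamental domain on $\partial P_i$ (seen as a side of $S_i$) sit at iterates of a common point under $f$. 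The main obstacle of the proof is precisely this bookkeeping: one must simultaneously adjust petal and sector fundamental domains so that the shared arcs match along all $4\ell$ petal-sector interfaces, with the attracting and repulsive cases treated by the symmetric argument applied to $f^{-1}$. Once this is arranged, gluing produces a homeomorphism $h$ on $D\setminus\{0\}$ intertwining $f$ and $f_0$, and setting $h(0)=0$ yields continuity at the origin because, by regularity of the petals and the definition of a nice sector, each piece shrinks uniformly to $0$ under iteration.
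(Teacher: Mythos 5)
Your proposal matches the paper's proof in all essentials: the ``only if'' direction is handled in both by appealing to the explicit Leau--Fatou/Camacho picture for $z\mapsto z(1+z^\ell)$, and for the ``if'' direction both conjugate the petals via Claim~\ref{claim1}, deduce the direct/indirect types from the alternation of sectors, and then glue across the sectors using Claim~\ref{claim2} and Remark~\ref{rema.sector} after shrinking the sectors so that the side arcs match the already-defined petal conjugacies. The only cosmetic difference is that the paper frames the ``if'' direction as showing any two homeomorphisms admitting nice discs with the same $\ell$ are mutually conjugate, rather than conjugating directly to the model $f_0$.
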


The next statement  takes into account a possible permutation of the petals.
\begin{prop}
\label{prop.carac2}
Let $f\in \hp$ and suppose that for some positive integer $n_{0}$ the map   $f^{n_{0}}$ is locally topologically  conjugate to
a parabolic homeomorphism.
Then so is $f$.
\end{prop}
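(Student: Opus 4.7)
The plan is to reduce the question to a high iterate of $f$ whose local dynamics is already understood, analyze how $f$ acts on the associated structure, and then build a topological conjugacy with the parabolic model dictated by that action.

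First I would set $g=f^{n_{0}}$. By hypothesis $g$ is locally conjugate to a parabolic germ, so by Camacho's version of Leau--Fatou, some power $g^{q}$ is locally conjugate to $F_{\ell}(z)=z(1+z^{\ell})$ for some $\ell\geq 1$. Setting $N=n_{0}q$, Proposition~\ref{prop.carac1} applied to $f^{N}$ yields a nice disc $D$ with $2\ell$ regular invariant petals $P_{1},\dots,P_{2\ell}$ and alternating attracting and repulsive nice sectors $S_{1},\dots,S_{2\ell}$.

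Second, identify the action of $f$ on this structure. Because $f$ commutes with $f^{N}$, and because the germ at $0$ of each $P_{i}\setminus\{0\}$ is an intrinsic invariant of the $f^{N}$-dynamics (it is one of the $2\ell$ connected components, near $0$, of the local basin of attraction, resp.\ repulsion, of $0$ under $f^{N}$), the map $f$ permutes these $2\ell$ germs. Since $f$ preserves orientation and the petals alternate between attracting and repulsive in cyclic order, this permutation must be a cyclic shift by some even integer $2j$. Writing $j/\ell=p'/q'$ in lowest terms and putting $r'=\ell/q'$, a natural candidate is the model
$$F_{p',q',r'}(z)=e^{2i\pi p'/q'}\,z(1+z^{q'r'}),$$
whose $q'$-th iterate is locally conjugate to $F_{\ell}$ and whose derivative at $0$ permutes its own $2\ell$ petals by the same shift $2j$ as $f$ does.

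Third, build a local conjugacy $\psi$ between $f$ and $F_{p',q',r'}$. Applying Proposition~\ref{prop.carac1} to $F_{p',q',r'}^{q'}$ gives a model nice disc $D'$ with its own petals $P'_{i}$ and sectors $S'_{i}$. Claim~\ref{claim1} produces a conjugacy between $f^{N}$ restricted to each $P_{i}$ and a translation, hence with $F_{p',q',r'}^{q'}$ restricted to the corresponding $P'_{i}$; Claim~\ref{claim2} together with Remark~\ref{rema.sector} produces a conjugacy on each sector $S_{i}$ with the corresponding model sector $S'_{i}$. I would first fix $\psi$ on one representative per $f$-orbit of petals and per $f$-orbit of sectors, then extend to all the others by the equivariance rule $\psi\circ f=F_{p',q',r'}\circ\psi$. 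The hard part will be this final gluing step: the conjugacies furnished by Claims~\ref{claim1} and~\ref{claim2} are far from canonical, and one must coordinate all the choices so that the pieces are simultaneously $f$-equivariant and agree along the boundary sides shared between petals and sectors. The freedom pointed out in Claim~\ref{claim1} and in item~3 of Remark~\ref{rema.sector} provides exactly what is needed to match them up, but the book-keeping around a full turn is the delicate point.
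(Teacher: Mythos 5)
Your overall strategy (pass to a high iterate $f^{N}$ conjugate to $z\mapsto z(1+z^{\ell})$, identify the induced permutation of the $2\ell$ petals, read off the rotation $e^{2i\pi p'/q'}$, and build the conjugacy piecewise on petals and sectors, propagating by equivariance) is exactly the paper's plan, and your arithmetic matching your model $F_{p',q',r'}$ to the paper's $z\mapsto e^{2i\pi i_{0}/(2\ell)}z(1+z^{\ell})$ is right. But there is a genuine gap in your second step, and it propagates to the last.

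You assert that $f$ permutes the \emph{germs} of the $P_{i}\setminus\{0\}$ because each germ is ``an intrinsic invariant of the $f^{N}$-dynamics, namely a component of the local basin of attraction (resp.\ repulsion).'' This is not correct: the local basin of attraction of $0$ under $f^{N}$ contains the attracting sectors $S_{i}$, not just the petals, and even after correcting the phrase to ``intersection of the basins,'' the germ of a regular invariant petal is not canonically determined by the dynamics --- two regular invariant petals along the same separatrix direction typically have different germs at $0$ (e.g.\ a petal and a strictly smaller invariant sub-petal). So $f(P_{i})$ is another regular invariant petal for $f^{N}$, but there is no reason for it to have the same germ as any $P_{j}$, and your set $\{P_{1},\dots,P_{2\ell}\}$ is simply not $f$-invariant. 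This matters concretely in your last step: ``extend $\psi$ to the others by the equivariance rule $\psi\circ f = F_{p',q',r'}\circ\psi$'' defines $\psi$ on $f(P_{i})$, not on $P_{\sigma(i)}$, so the pieces do not fit. The paper resolves exactly this point with a construction you skip: it takes small $F$-invariant petals $Q_{i}\subset P_{i}$, observes that all iterates $f^{n}(Q_{i})$ (finitely many, by periodicity) land inside the $P_{j}$'s, and, within each $P_{j}$, intersects the interiors of all such iterates; Claim~\ref{claim3} is introduced precisely to guarantee that the closure of this intersection is again a regular invariant petal $\bar P_{j}$. The family $\{\bar P_{j}\}$ \emph{is} genuinely permuted by $f$, and only then does the equivariant gluing make sense. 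A secondary point you also omit is the reduction from $n_{0}$ to the order $n'_{0}$ of the permutation, which is what makes the equivariant extension consistent around a full turn (the conjugacy on a base petal must intertwine $f^{n'_{0}}$, not $f^{n_{0}}$, with the model's $n'_{0}$-th power).
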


The proofs are delayed until section~\ref{ss.proof}.

\section{Proof of the theorem}
From now on, $f$ denotes an orientation preserving homeomorphism of the plane that fixes $0$ and satisfies the short trip property.
We fix some open neighbourhood $V$ of $0$ as in the definition of the short trip property, and define the sets
$$
W^s(V) = \bigcap _{n \geq 0} f^{-n}(V) \ \  \mbox{ and } \ \ W^u(V) = \bigcap _{n \leq 0} f^{-n}(V).
$$
Since our hypothesis is symmetric in time both sets share the same properties,
and we will usually restrict the study to $W^s(V)$.

\subsection{Orbits}\label{ss.orbits}

The following lemma shows in particular that the orbits of points near $0$ can only converge
to $0$ or escape from the neighbourhood $V$.
Note that this lemma still holds in any dimension.

\begin{lemma}\label{lem.orbits}~

\begin{enumerate}
\item For every compact subset $K$ of $W^s(V) \setminus \{0\}$, the sequence $(f^n(K))_{n\geq 0}$  converges to $0$.
\item The set $W^s(V)\setminus \{0\}$ 
is open.
\item  The set $W^s(V) \cup W^u(V)$ is a neighbourhood of $0$.
\item If $W^s(V)$  is a  neighbourhood of $0$ then 
$f$ is locally topologically conjugate to $z \mapsto \frac{1}{2}z$; 
 if $W^u(V)$  is a  neighbourhood of $0$ then 
$f$ is locally topologically conjugate to $z \mapsto 2z$. 
\end{enumerate}
\end{lemma}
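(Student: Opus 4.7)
The plan is to prove the four items in turn. The uniform mechanism is: choose a small open neighbourhood $W$ of $0$ with $\adhe(W)\subset V$, obtain the constant $N_W$ from the short trip property, and combine it with continuity of $f$ at $0$ to trap orbits. For item 1, given any target neighbourhood $W$ of $0$, shrink it to an open $W'\subset W$ with $\adhe(W')\cap K=\emptyset$, possible because $K$ is compact and $0\notin K$. For $x\in K\subset W^s(V)$ the whole forward orbit lies in $V$ and $x\notin W'$, so the short trip property applied with $W'$ forces $f^n(x)\in W'\subset W$ as soon as $n\geq N_{W'}$, uniformly in $x\in K$.

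For item 2, I would argue by sequences: assume $x_k\to x\in W^s(V)\setminus\{0\}$ with $x_k\notin W^s(V)$, and let $m_k\geq 1$ be the smallest integer with $f^{m_k}(x_k)\notin V$. If $(m_k)$ stays bounded, extract a constant subsequence $m_k=m$; continuity of $f^m$ together with closedness of the complement of $V$ gives $f^m(x)\notin V$, contradicting $x\in W^s(V)$. If $m_k\to\infty$, choose an open neighbourhood $W$ of $0$ with $\adhe(W)\subset V$, $f(\adhe(W))\subset V$ and $x\notin\adhe(W)$; then for large $k$ the endpoints $x_k$ and $f^{m_k-1}(x_k)$ both lie outside $W$ (the latter because otherwise $f^{m_k}(x_k)\in f(W)\subset V$), so the short trip property applied to the orbit segment joining them gives $m_k-1\leq N_W$, contradicting $m_k\to\infty$.

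For item 3, I would choose $W$ small enough that $\adhe(W)\subset V$, $f(\adhe(W))\subset V$ and $f^{-1}(\adhe(W))\subset V$, obtain $N_W$, and by continuity at $0$ pick a smaller open neighbourhood $W'$ of $0$ such that $f^j(W')\subset W$ for every $|j|\leq N_W+2$. If some $y\in W'$ lay outside $W^s(V)\cup W^u(V)$, its forward and backward exit times $m,m'$ would both exceed $N_W+2$; but, as in item 2 applied also to $f^{-1}$, one has $f^{m-1}(y)\notin W$ and $f^{-m'+1}(y)\notin W$, so the short trip property forces $m+m'-2\leq N_W$, contradicting $m+m'>2(N_W+2)$.

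For item 4, assume $W^s(V)$ contains an open neighbourhood of $0$; pick a closed topological disc $D\subset W^s(V)$ which is a neighbourhood of $0$, and a smaller open disc $B$ with $\adhe(B)\subset\inte(D)$. For any open ball $W$ around $0$, item 1 applied to the compact sets $\partial B$ and $D\setminus B$ gives $f^n(\partial B)\subset W$ and $f^n(D\setminus B)\subset W$ for $n$ large; since $f^n(\partial B)$ is a Jordan curve in the simply connected $W$, its bounded complement $f^n(B)$ also lies in $W$, so $f^n(D)\subset W$, i.e.\ $f^n(D)\to 0$. Proposition~\ref{prop.contraction} then yields the conjugacy with $z\mapsto z/2$, and the case of $W^u(V)$ follows by applying the same reasoning to $f^{-1}$. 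The main obstacle I expect is item 2: the key point is to arrange $f(\adhe(W))\subset V$, which is precisely what forces the final in-$V$ iterate of $x_k$ to avoid $W$ so that the short trip property can be invoked with both endpoints outside $W$.
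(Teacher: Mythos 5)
Your proof is correct and rests on the same mechanism as the paper's throughout: fix $W$ with suitable $f^{\pm 1}$-invariance, obtain $N_W$, and trap the orbit inside $W$ once it enters, since escaping would produce a long segment in $V$ with both endpoints outside $W$. For item~2 the paper writes down the explicit open neighbourhood $O=\bigcap_{n=0}^{N_W} f^{-n}(V)\setminus\adhe(W)$ of the given point and shows $O\subset W^s(V)$, whereas you phrase the same reasoning as a sequential contradiction; for item~4 you supply the Jordan-curve detail the paper leaves implicit when it deduces $f^n(D)\to 0$ from item~1 and invokes Proposition~\ref{prop.contraction} (one could even skip $B$ and apply item~1 directly to $\partial D$).
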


\begin{proof}
Let $K$ be a compact subset of  $W^s(V)\setminus \{0\}$, and let $W$ be some neighbourhood of $0$ disjoint from $K$. Note that by definition of $W^s(V)$, for every positive $n$, $f^n(K) \subset V$. Now let $N_{W}$ be given by the short trip property. 
Then the property forces  $f^n(K) \subset W$ for every  $n > N_{W}$.
This proves item 1 of the lemma.

Let $x\neq0$ be some point in $W^s(V)$, and $W$ a neighbourhood of $0$ whose closure  does not contain $x$, and such that $W \cup f(W) \subset V$. Let $N_{W}$ be given by the short trip property. 
Let 
$$
O = \left( \bigcap_{n=0}^{N_{W}} f^{-n}(V) \right) \setminus \adhe(W).
$$
This is an open set that contains $x$. We prove item 2 of the lemma by showing that $O$ is included in $W^s(V)$. To see this, let $y \in O$. By definition of $N_{W}$ in the short trip property we have $f^{N_{W}}(y) \in W$. Then we claim that $f^n(y) \in W$ for every $n \geq W$, which will entail $y \in W^s(V)$ as wanted. Assume by contradiction that our claim does not hold and
let $n_{0}$ be the least  integer after $N_{W}$ such that $f^{n_{0}}(y) \not \in W$. Since $f(W) \subset V$ we have $f^{n_{0}}(y) \in V$. The segment of orbit $y,  \dots , f^{n_{0}}(y)$ contradicts the definition of $N_{W}$. This completes the proof of item 2.

We consider again  a neighbourhood $W$ of $0$ such that $W \cup f(W) \cup f^{-1}(W) \subset V$ and $N_{W}$ given by the short trip property. We define the following neighbourhood of $0$,
$$
Z = \bigcap_{n=-N_{W}}^{N_{W}} f^{-n}(W).
$$
We prove by contradiction that $Z \subset W^s(V) \cup W^u(V)$.
Assume some point $x \in Z$ does not belong to $W^s(V)$ nor to $W^u(V)$. Since $W \subset V$,
 the orbit $(f^n(x))$ of $x$ leaves $W$ both in the past and in the future; but by definition of $Z$ this cannot happen for $n$ between $-N_{W}$ and $N_{W}$. Let $r,s$ be the least positive integers such that the points $f^{-r}(x)$ and $f^s(x)$ do not belong to $W$; since $f(W) \cup f^{-1}(W) \subset V$  both points belong to $V \setminus W$ and again we have found a segment of orbit of length $r+s > 2N_{W}$ contradicting the definition of $N_{W}$.

Finally we notice that item 4 is a consequence of item 1 and the   topological characterisation of contractions (proposition~\ref{prop.contraction} above).
\end{proof}

\subsection{Construction of the  petals}\label{ss.petals}
We still consider a homeomorphism $f \in \hp$ satisfying the short trip property, 
and from now on we assume that $f$ is not locally conjugate to the contraction $z \mapsto \frac{1}{2}z$
nor to the dilatation $z \mapsto 2z$. We aim to prove that $f$ is locally conjugate to a  parabolic homeomorphism by ultimately applying propositions~\ref{prop.carac1} and~\ref{prop.carac2}. The main task will be to contruct the family of periodic petals. As a first approximation we will select a finite number of connected components of 
$W^s(V) \cap W^u (V) \setminus \{0\}$, hoping to find one  petal inside each of these components.

We fix an open neighbourhood $V$ of $0$ as before, and we assume $V$ is simply connected.
 According to item 3 of the previous lemma, we can choose a topological closed  disc $D$ which is a neighbourhood of $0$ and included in $W^s(V) \cup W^u(V)$. According to item 4, since we excluded the cases of contractions and dilatations, 
$D$ is not included in $W^s(V)$ nor in $W^u(V)$.
By compactness we can decompose $\partial D$ as the concatenation of $2\ell \geq 2$ arcs $\alpha_{1}, \dots , \alpha_{2\ell}$ such that $\alpha_{i}$ is included in $W^s (V)$ for odd $i$ and in $W^u(V)$ for even $i$. We make the following minimality hypothesis:
 \emph{the number $\ell$ is minimal among all such choices of topological closed  discs  $D$ and decompositions of $\partial D$.}

 For every $i$ (integer modulo $2\ell$) the common endpoint point $x_{i}$  of $\alpha_{i-1}$ and  $\alpha_{i}$ belongs to  $W^s(V) \cap W^u (V)$. We denote by $\cC_{i}$ the connected component of $W^s(V) \cap W^u (V) \setminus \{0\}$ that contains $x_{i}$.
According to item 2 of lemma~\ref{lem.orbits}, this set $\cC_{i}$ is open.
Let $D'$ be a topological closed disc; since $V$ is simply connected,
if $\partial D' \subset V$ then $D' \subset V$. Applying this to the iterates of $D'$, we see that
if $\partial D' \subset  W^s(V) \cap W^u (V) \setminus \{0\}$, then  $D' \subset W^s(V) \cap W^u (V)$. Since $W^s(V)$
is not a neighbourhood of $0$,  we get the following consequence. 
\begin{lemma}\label{lem.simply-connected}
Any connected component of $W^s(V) \cap W^u (V) \setminus \{0\}$ is open and simply connected. In particular, the sets $\cC_{i}$ are 
homeomorphic to the plane.
\end{lemma}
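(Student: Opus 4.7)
The plan is to treat openness first, then simple connectedness by a Jordan curve argument (essentially the one sketched just before the lemma statement), and finally deduce the homeomorphism type from Riemann mapping.

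For openness, note that $W^s(V)\setminus\{0\}$ is open by item~2 of Lemma~\ref{lem.orbits}, and applying the same item to $f^{-1}$ (which also satisfies the short trip property on $V$) gives that $W^u(V)\setminus\{0\}$ is open. Hence $W^s(V)\cap W^u(V)\setminus\{0\}$ is open; being an open subset of the plane, it is locally connected, so each of its connected components $\cC$ is open.

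For simple connectedness, I would argue as suggested in the paragraph preceding the lemma: since $\cC$ is an open connected subset of the plane, it suffices to show that for every Jordan curve $\gamma\subset\cC$ the bounded component $D'$ of $\bbR^2\setminus\gamma$ is contained in $\cC$. For this, first observe that $\gamma\subset V$ and $V$ is simply connected, so $D'\subset V$. Applying the same fact to $f^n(D')$, whose boundary $f^n(\gamma)$ lies in $V$ for every $n\in\bbZ$ (because $\gamma\subset W^s(V)\cap W^u(V)$), one gets $f^n(D')\subset V$ for all $n\in\bbZ$, hence $D'\subset W^s(V)\cap W^u(V)$. Next I would rule out that $0\in D'$: if it were, then $D'$ would be a neighbourhood of $0$ contained in $W^s(V)$, and item~4 of Lemma~\ref{lem.orbits} would force $f$ to be locally conjugate to the contraction, contradicting the standing assumption at the start of Section~\ref{ss.petals}. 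Therefore $D'\subset W^s(V)\cap W^u(V)\setminus\{0\}$; since $D'$ is connected and meets $\cC$ (along $\gamma$), it lies in $\cC$. This proves simple connectedness.

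Finally, since $0\notin\cC$, the component $\cC$ is a simply connected open proper subset of $\bbR^2$, hence homeomorphic to the open unit disc (Riemann mapping theorem), and therefore to the plane. The main potential pitfall is the $0\in D'$ case; but the standing hypothesis and item~4 of Lemma~\ref{lem.orbits} take care of it cleanly, so I do not foresee any real obstacle.
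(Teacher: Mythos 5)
Your proof is correct and follows the same approach as the paper, which establishes the lemma via the Jordan-curve / simple-connectedness argument sketched in the paragraph immediately preceding the lemma statement (a closed disc with boundary in $W^s(V)\cap W^u(V)\setminus\{0\}$ is contained in $W^s(V)\cap W^u(V)$, and cannot contain $0$ since $W^s(V)$ is not a neighbourhood of $0$). You have merely made explicit the openness step from item~2 of Lemma~\ref{lem.orbits} and the final passage from a proper open simply connected subset to a homeomorph of the plane.
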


The next lemma is the fundamental step in the construction of the periodic  petals. No dynamics is involved here; indeed, we will only need properties 2 and 3 from lemma~\ref{lem.orbits} on the topology of $W^s(V)$ and $W^u(V)$.
\begin{lemma}\label{lem.closure}
For every $i$, the closure of $\cC_{i}$ contains the fixed point $0$.
\end{lemma}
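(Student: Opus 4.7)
The plan is to argue by contradiction: assume $0 \notin \overline{\cC_{i}}$, and use the minimality of $\ell$ to reach a contradiction. The key idea is that if $\cC_i$ is ``detached'' from $0$, then the transition of $\partial D$ through $x_i$ is a feature that is not forced by the local structure at $0$, so rerouting $\partial D$ around the compact set $\overline{\cC_i}$ should yield a strictly better decomposition.

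I would first pin down the local structure at $x_i$. Let $J$ denote the connected component of the open set $\cC_i \cap \partial D$ containing $x_i$; it is an open sub-arc of $\partial D$ with endpoints $p, q \in \partial \cC_i$. A preliminary observation is that any $y \in \partial \cC_i \setminus \{0\}$ cannot lie in $W^s(V) \cap W^u(V)$: otherwise the component of $W^s \cap W^u \setminus \{0\}$ through $y$ would accumulate on $\cC_i$ and hence coincide with it, contradicting $y \notin \cC_i$. Combined with the hypothesis $0 \notin \overline{\cC_i}$ this forces $p, q \notin W^s \cap W^u$. The minimality of $\ell$ also forbids any $\alpha_j$ from lying entirely in $W^s \cap W^u$ (else it could be re-labelled to the other type and absorbed into a neighbor), so $p \in \mathrm{int}(\alpha_{i-1}) \subset W^u$ and $q \in \mathrm{int}(\alpha_i) \subset W^s$; hence $p \in W^u \setminus W^s$ and $q \in W^s \setminus W^u$.

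Next I would construct the modification. Since $\overline{\cC_i}$ is compact at positive distance from $0$, and since $W^s \setminus \{0\}$ and $W^u \setminus \{0\}$ are open by Lemma~\ref{lem.orbits}, there exist open neighborhoods of $p$ contained in $W^u$ and of $q$ contained in $W^s$. Exploiting the simple connectedness of $\cC_i$ (Lemma~\ref{lem.simply-connected}), I would build a closed topological disc $B$ with $\overline{\cC_i} \cap D \subset \mathrm{int}(B)$, $0 \notin B$, and $\partial B \cap \partial D = \{p^*, q^*\}$ where $p^* \in \alpha_{i-1} \cap (W^u \setminus W^s)$ and $q^* \in \alpha_i \cap (W^s \setminus W^u)$ lie close to $p$ and $q$. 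Setting $D' := D \setminus \mathrm{int}(B)$ then yields a topological closed disc (obtained by cutting a notch out of $D$), still a neighborhood of $0$ and still included in $W^s \cup W^u$, whose boundary is obtained from $\partial D$ by replacing the sub-arc through $x_i$ (from $p^*$ to $q^*$) with $\beta := \partial B \cap D$; the arc $\beta$ is disjoint from $\cC_i$ by construction.

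The hard part will be arguing that $\partial D'$ admits a decomposition into strictly fewer than $2\ell$ alternating arcs. The arc $\beta$ lies in $(W^s \cup W^u) \setminus \cC_i$ with endpoints in the two ``pure'' sides $W^u \setminus W^s$ and $W^s \setminus W^u$; by appropriate choice of $B$, it decomposes as a $W^u$-piece (starting at $p^*$) followed by a $W^s$-piece (ending at $q^*$), meeting at a single transition point inside some component $\cC_j$ with $j \neq i$. Merging these with the shortened $\alpha_{i-1}$ and $\alpha_i$ produces a candidate decomposition of $\partial D'$; the delicate final step is to steer the detour so that this new transition actually coincides with an already existing transition $x_k$ of the original decomposition (i.e. $\cC_j = \cC_k$), so that two transitions collapse into one and the arc count drops from $2\ell$ to $2\ell - 2$. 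This is the main obstacle, and it relies on using the room provided by $0 \notin \overline{\cC_i}$ to push $\beta$ through a $0$-touching component of $W^s \cap W^u \setminus \{0\}$ already present in $\partial D$. Granting this, we contradict the minimality of $\ell$ and conclude that $0 \in \overline{\cC_i}$.
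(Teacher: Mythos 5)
Your overall strategy (contradiction from minimality of $\ell$, using that $\overline{\cC_i}$ is compact and bounded away from $0$) is the right one, and your preliminary observations about $\partial\cC_i$ being disjoint from $W^s(V)\cap W^u(V)$ and about $p,q$ lying on opposite ``pure'' sides are correct. But the construction you propose does not reach the contradiction, and the gap you flag at the end is not a technical detail to be filled in --- it is the argument.

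Concretely: if you cut a notch out of $D$ by removing $\mathrm{int}(B)$, you delete exactly one transition point of $\partial D$ (namely $x_i$) and insert the new arc $\beta$ running from $p^*\in W^u\setminus W^s$ to $q^*\in W^s\setminus W^u$. Any such $\beta$ must carry an odd number $k\geq 1$ of transitions between the open sets $W^s\setminus W^u$ and $W^u\setminus W^s$, so $\partial D'$ has $(2\ell-1)+k\geq 2\ell$ transitions --- the count never drops. There is no well-defined sense in which a transition on $\beta$ can ``coincide'' with, or ``collapse onto,'' an existing $x_k$: the transitions on $\beta$ are new points of $\partial D'$ lying in the interior of $D$, disjoint from the old $x_k$, and being in the same component $\cC_k$ as some $x_k$ has no bearing on the count of alternating boundary arcs. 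So as written the contradiction is never obtained. There is also no reason offered for why $\beta$ can be chosen with only a single transition, since $\beta$ must avoid $\cC_i$ but may cross many other components of $W^s\cap W^u\setminus\{0\}$.

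The paper's proof avoids this entirely by constructing a \emph{cross-cut} $\alpha$ of $D$ separating $x_i$ from $0$ and lying \emph{entirely} in $W^s(V)$ (or entirely in $W^u(V)$), with both endpoints outside the other set. Such an arc exists because of the following connectedness argument: exhaust $\cC_i$ by topological discs $D_k$ (using Lemma~\ref{lem.simply-connected}); for each $k$, Newman's separation theorem gives a connected piece $\Gamma_k$ of $\partial D_k\cap\mathrm{int}(D)$ separating $0$ from a point near $x_i$; pass to a Hausdorff limit $F$ of the $\Gamma_k$. This $F$ is compact, connected, contained in $\partial\cC_i$ (hence disjoint from $W^s\cap W^u$), contained in $D\subset W^s\cup W^u$, and avoids $0$, so by connectedness it lies in exactly one of $W^s\setminus W^u$ or $W^u\setminus W^s$. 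One then perturbs a $\Gamma_k$ near its endpoints into $F$ to obtain $\alpha$. The sub-disc of $D$ cut off by $\alpha$ on the $0$-side is again a neighbourhood of $0$ contained in $W^s\cup W^u$, and because $\alpha$ and the two arcs $\alpha_{i_1},\alpha_{i_2}$ it abuts are all in $W^s$ they merge into a single boundary arc, so the boundary decomposes into $i_2-i_1\leq 2\ell-2$ alternating arcs. That is where the strict drop in $\ell$ comes from --- not from rerouting around $\cC_i$, but from replacing a whole chunk of $\partial D$ by an arc of a single type. You would need something playing the role of this single-type cross-cut; the detour $\beta$ you construct is forced to change type at least once and so cannot do the job.
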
 


\begin{proof}
For notational simplicity we assume $i=1$, and we note  $\cC=\cC_{1}$ and  $x=x_{1} \in \alpha_{2\ell} \cap \alpha_{1}$. Using Schoenflies theorem, up to a change of coordinates, we can assume that $D$ is a euclidean closed disc.

We will argue by contradiction. Assuming that $0$ does not belong to the closure of $\cC$,
we will construct a simple arc
 $\alpha$ with the following properties (we denote by $\partial \alpha$ the set of endpoints of $\alpha$ and set $\inte(\alpha)=\alpha \setminus \partial \alpha$):
\begin{enumerate}
\item $\inte(\alpha) \subset \inte(D)$, $\partial \alpha \subset \partial D$;
\item $\alpha$ separates\footnote{A set $A$ \emph{separates} two points  in a set $B$ if the two points belong to distinct
connected components of $B \setminus A$.} $x$ from $0$ in  $D$;
\item either $\alpha \subset W^s(V)$ and  $\partial \alpha \cap W^u(V) = \emptyset$,

or $\alpha \subset W^u(V)$ and  $\partial \alpha \cap W^s(V) = \emptyset$.
\end{enumerate}

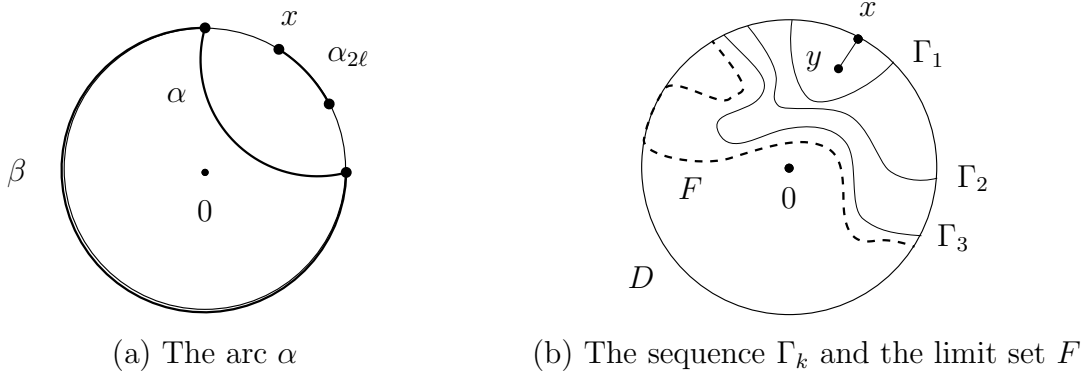
\begin{figure}[htbp]
\begin{center}
\ifx\JPicScale\undefined\def\JPicScale{1}\fi
\psset{unit=\JPicScale mm}
\psset{linewidth=0.3,dotsep=1,hatchwidth=0.3,hatchsep=1.5,shadowsize=1}
\psset{dotsize=0.7 2.5,dotscale=1 1,fillcolor=black}
\psset{arrowsize=1 2,arrowlength=1,arrowinset=0.25,tbarsize=0.7 5,bracketlength=0.15,rbracketlength=0.15}
\begin{pspicture}(0,0)(125,59.82)
\psdots[linewidth=0.1](25.03,29.26)
(25.03,29.26)
\rput{90}(25.03,29.74){\psellipse[linewidth=0.15](0,0)(18.78,18.78)}
\rput{90}(25.03,29.26){\parametricplot[arrows=*-*]{31.61}{61.7}{ t cos 19.26 mul t sin -18.77 mul }}
\psdots[](25.03,48.52)
(43.81,29.26)
\rput{0}(39.97,44.3){\parametricplot[arrows=-]{164.22}{284.33}{ t cos 15.52 mul t sin 15.52 mul }}
\rput{0}(24.88,29.59){\parametricplot[arrows=-]{89.54}{359.01}{ t cos 18.93 mul t sin 18.93 mul }}
\rput(25.03,24.12){$0$}
\rput(0,29.26){$\beta$}
\rput(21.28,39.53){$\alpha$}
\rput(36.3,49.8){$x$}
\rput[l](41.31,44.67){$\alpha_{2\ell}$}
\rput(52.57,24.12){}
\rput{0}(102.69,29.99){\psellipse[linewidth=0.15](0,0)(19.69,19.69)}
\psdots[linewidth=0.2](111.88,47.03)
(111.88,47.03)
\rput(113.19,51){$x$}
\psdots[linewidth=0.2](102.69,29.84)
(102.69,29.84)
\psdots[linewidth=0.2](111.88,47.03)
(109.25,43.06)
\pscustom[linestyle=dashed,dash=1 1]{\psbezier(92.45,46.77)(99.01,38.83)(96.12,40.42)(94.81,39.1)
\psbezier(94.81,39.1)(93.5,37.77)(87.33,42.4)(86.02,40.29)
\psbezier(86.02,40.29)(84.71,38.17)(83.79,35.92)(83.39,33.67)
\psbezier(83.39,33.67)(83,31.43)(91.01,29.97)(94.81,31.16)
\psbezier(94.81,31.16)(98.62,32.35)(102.95,33.81)(105.97,33.15)
\psbezier(105.97,33.15)(108.99,32.48)(109.91,31.03)(109.91,25.74)
\psbezier(109.91,25.74)(109.91,20.45)(111.88,19.79)(114.11,20.18)
\psbezier(114.11,20.18)(116.34,20.58)(119.36,19.39)(119.36,19.39)
}
\psline[linewidth=0.1](111.88,47.03)(109.25,43.06)
\pscustom[linewidth=0.1]{\psbezier(103.08,49.55)(102.43,45.18)(103.48,40.68)(105.31,39.1)
\psbezier(105.31,39.1)(107.15,37.51)(113.58,40.42)(116.6,43.99)
}
\pscustom[linewidth=0.1]{\psbezier(94.02,47.56)(97.96,43.46)(100.59,41.74)(98.88,39.36)
\psbezier(98.88,39.36)(97.18,36.98)(91.79,37.51)(93.37,34.47)
\psbezier(93.37,34.47)(94.94,31.43)(98.88,34.34)(102.03,35)
\psbezier(102.03,35)(105.18,35.66)(108.59,35.53)(110.3,33.15)
\psbezier(110.3,33.15)(112.01,30.76)(110.56,24.68)(112.53,23.09)
\psbezier(112.53,23.09)(114.5,21.51)(116.47,21.11)(120.28,20.85)
}
\pscustom[linewidth=0.1]{\psbezier(97.18,48.75)(99.54,45.84)(101.11,43.2)(100.98,41.48)
\psbezier(100.98,41.48)(100.85,39.76)(100.32,39.1)(101.24,38.17)
\psbezier(101.24,38.17)(102.16,37.25)(107.54,38.44)(110.43,36.98)
\psbezier(110.43,36.98)(113.32,35.53)(114.5,30.76)(115.94,29.57)
\psbezier(115.94,29.57)(117.39,28.38)(120.01,27.85)(122.38,28.52)
}
\rput[l](119.09,45.05){$\Gamma_1$}
\rput[l](125,28.52){$\Gamma_2$}
\rput[l](122.38,20.58){$\Gamma_3$}
\rput(89.56,27.19){$F$}
\rput(102.69,25.87){$0$}
\rput(83,15.29){$D$}
\rput(105.97,44.12){$y$}
\rput(25,5){(a) The arc $\alpha$}
\rput(105,5){(b) The sequence $\Gamma_{k}$ and the limit set $F$}
\rput(0,50){}
\end{pspicture}
\caption{Proof of lemma~\ref{lem.closure}}
\label{fig3}
\end{center}
\end{figure}

From this we will get a contradiction as follows (see figure~\ref{fig3}, (a)).
Assume for example that the first case of the last item holds.
Let $1  \leq i_{1} \leq i_{2} \leq 2 \ell$ be such that the endpoints of $\alpha$ are respectively included in $\alpha_{i_{1}}$ and $\alpha_{i_{2}}$. 
 Since $\partial \alpha$ does not meet $W^u(V)$, both $i_{1}$ and $i_{2}$ are odd, and  in particular  $1 \leq i_{1} \leq i_{2} < 2\ell$.
Let $\beta \subset \partial D$ be the arc with the same endpoints as $\alpha$ and not containing $x$: 
then $\beta$ is covered by $\alpha_{i_{1}} \cup \cdots \cup \alpha_{i_{2}}$, and from the second point we see that 
the Jordan curve $\alpha \cup \beta$ surrounds $0$.
Since $\alpha \cup \alpha_{i_{1}} \cup \alpha_{i_{2}}$ is included in $W^s(V)$,  we can write $\alpha \cup \beta$ as the concatenation of  $i_{2}-i_{1}$ arcs, each included in $W^s(V)$ or $W^u(V)$. This contradicts the minimality hypothesis on $\ell$ since $i_{2} -i_{1} < 2\ell$.

We now assume that $0 \not \in \adhe(\cC)$ and
turn to the construction of such an arc $\alpha$.
According to lemma~\ref{lem.simply-connected}, there exists a homeomorphism $\Phi : \bbR^2 \ra \cC$. Let $(D_{k})$ be the sequence of images under $\Phi$ of the concentric discs with radius $k$ and centre $\Phi^{-1}(x)$. Thus:
\begin{itemize}
\item $x \in D_{1}$,
\item $D_{k} \subset \inte(D_{k+1})$,
\item $\cup_{k \geq 0} D_{k} = \cC.$
\end{itemize}

Let $y$ be some point in $\inte(D)$ sufficiently near $x$ so that the segment $[xy]$ is included in $D_{1} \cap D$.
For every $k$, the set $\partial D_{k} \cap \inte(D)$ is closed in $\inte(D)$
and separates $y$ from $0$ in $\inte(D)$ since $0 \not \in D_{k}$. According to theorem~ V.14.3 in ~\cite{New},
there exists a connected component  of $\partial D_{k} \cap \inte(D)$
that also separates $y$ from $0$. Let $\Gamma_{k}$ denotes the closure of this component;
 thus $\Gamma_{k}$ is a sub-arc of the Jordan curve $\partial D_{k}$ with $\inte(\Gamma_{k}) \subset \inte(D)$, $\partial \Gamma_{k} \subset \partial D$,
 it separates $x$ from $0$ in $D$: in other words it satisfies the first two above properties required for the arc $\alpha$.

Remember that the space of compact connected subsets of $D$ is compact under the Hausdorff metric.
Thus,  up to extraction, we can assume that the sequence  $(\Gamma_{k})$ converges  to a compact connected set $F \subset D$  (see figure~\ref{fig3}, (b)). Since $\Gamma_{k} \subset \partial D_{k}$, the set $F$ is included in $\partial \cC$. By assumption $\partial \cC$ does not contain $0$, so nor does the set $F$.
Then again $F$ separates $y$ from $0$ in $\inte(D)$: if not, there would exist an arc in $\inte(D)$ from $y$ to $0$ missing $F$, but then this arc would also miss $\Gamma_{k}$ for sufficiently big $k$, contrary to the property that $\Gamma_{k}$ separates $y$ from $0$.

 Since $\cC$ is a connected component of the open set $W^s(V) \cap W^u(V) \setminus\{0\}$, its boundary $\partial \cC$  is disjoint from this set. Thus $F$ is disjoint from 
$W^s(V) \cap W^u(V)$;  and since it is included in $D$ it is covered by the two open sets 
$W^s(V) \setminus \{0\}$ and $W^u(V) \setminus \{0\}$. Since $F$ is connected it must be included in one of these two sets, and disjoint from the other one.

To fix ideas suppose that $F \subset W^s(V) \setminus W^u(V)$.
Then for $k$ large enough the arc $\Gamma_{k}$ is also included in $W^s(V)$.
Now this arc almost satisfies the three above properties required for the arc $\alpha$, it only fails to have its endpoints outside $W^u(V)$.
To remedy this we notice that, up to extraction, the two sequences of endpoints $(\Gamma_{k}(0))$, $(\Gamma_{k}(1))$ converges to some points in $z_{0}, z_{1} \in F \cap \partial D$.
Since $F \subset W^s(V) \setminus\{0\}$ we can choose $\varepsilon>0$ so that the euclidean balls $B_{0}$, $B_{1}$ of radius $\varepsilon$ and respective centres $z_{0},z_{1}$ are included in $W^s(V)$. For $k$ large enough $\Gamma_{k}$ meets both balls,
and then we construct the wanted arc $\alpha$ by modifying $\Gamma_{k}$ near its endpoints: we replace two small extreme sub-arcs of $\Gamma_{k}$, respectively  included in $B_{0}$ and $B_{1}$,  by two  euclidean segments reaching the points $z_{0}$ and $z_{1}$. Note that  since $D$ is a euclidean disc both segments, apart from their endpoints $z_{0},z_{1}$, are included in $\inte(D)$.
The endpoints $z_{0},z_{1}$ of the resulting arc $\alpha$ are in $F$, thus outside $W^u(V)$, and $\alpha$ fulfils the third property while still satisfying the first two.
As we explained at the beginning of the proof, the existence of $\alpha$ contradicts the minimality of $\ell$.
\end{proof}

\subsection{Periodicity of the petals}
Unfortunately, we are not able to prove directly that the sets $\cC_{i}$ of the previous section
are periodic for $f$. To overcome this difficulty we will consider slightly larger sets $\cC'_{i}$ 
which will turn out to be periodic. In the next section we will find a periodic petal inside each 
set $\cC'_{i}$.

We suppose that the closure of $V$ is included in some neighbourhood $V'$ of $0$
which still  satisfies the short trip property. In other words, we apply the results of the
 previous sections with $V$ small enough to meet this new assumption.
We note that lemmas~\ref{lem.orbits} and~\ref{lem.simply-connected} apply to $V'$.
Obviously the inclusions $W^s(V) \subset W^s(V')$ and $W^u(V) \subset W^u(V')$ hold.
Let the disc $D$ and the sets $\cC_{i}$ be defined from $V$ as in the previous section.
Each set $\cC_{i}$ is connected and included in $W^s(V') \cap W^u(V') \setminus\{0\}$,
and thus it is included in one connected component of $W^s(V') \cap W^u(V') \setminus\{0\}$
which we denote by $\cC'_{i}$.
\begin{lemma}
The sets $\cC'_{i}$ are periodic: 
for every $i$ there exists some positive integer $q_{i}$ such that $f^{q_{i}}(\cC'_{i})=\cC'_{i}$.
\end{lemma}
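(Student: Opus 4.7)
My plan is to prove the lemma by establishing that the family of components of $\Omega':=W^s(V')\cap W^u(V')\setminus\{0\}$ having $0$ in their closure is finite and $f$-invariant, hence permuted by $f$ in a finite way. Periodicity of each $\cC'_i$ is then immediate.

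First, I would observe that $W^s(V')\cap W^u(V')$ admits the symmetric description $\{x:f^n(x)\in V'\ \forall n\in\bbZ\}$, which is manifestly invariant under both $f$ and $f^{-1}$. Together with $f(0)=0$, this shows that $f$ restricts to a self-homeomorphism of $\Omega'$ and hence permutes its connected components; let $\sigma$ denote the induced bijection on the indexing set, so $f(\cC'_i)=\cC'_{\sigma(i)}$. Moreover, $f$ maps components of $\Omega'$ with $0$ in closure to components with $0$ in closure, so the sub-family $\cF$ of such components is $\sigma$-invariant. Each $\cC'_i$ belongs to $\cF$: indeed $\cC_i\subset\cC'_i$, and by lemma~\ref{lem.closure} one has $0\in\overline{\cC_i}\subset\overline{\cC'_i}$.

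The task then reduces to proving that $\cF$ is finite, in fact that $|\cF|\leq 2\ell$. The expected argument uses the minimality hypothesis on $\ell$: supposing $\cF$ contained a component $\cC'_*$ distinct from all the $\cC'_i$, one picks a point of $\cC'_*$ near $0$ and, via the plane-separation technique of the proof of lemma~\ref{lem.closure}, produces an arc inside $\cC'_*$ crossing $\partial D$ between $0$ and some point of $\cC'_*$ away from $0$. Using this crossing arc to re-route $\partial D$---possibly after replacing $D$ by a nearby closed disc---should yield a new alternating decomposition of the boundary with strictly fewer than $2\ell$ pieces, contradicting the minimality of $\ell$. Once $\cF$ is known to be finite and $\sigma$-invariant, $\sigma$ restricts to a permutation of a set of size $\leq 2\ell$, so each $\cC'_i$ has $\sigma$-orbit of length $q_i\leq 2\ell$, giving the lemma.

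The main difficulty I foresee is making the re-routing argument fully rigorous. The delicate case is when $\cC'_*$ does not meet $\partial D$ at all, because then no direct re-routing is available; one may have to iterate $f$ first to push $\cC'_*$ until it intersects $\partial D$, combining the short trip property with the property $0\in\overline{\cC'_*}$ to control the iteration. An alternative approach that sidesteps this would be a direct pigeonhole on the orbit of $x_i$: since $f^n(x_i)\to 0$ as $n\to\pm\infty$ by lemma~\ref{lem.orbits}(1), the visited components $\cC'_{\sigma^n(i)}$ accumulate at $0$, and it is then enough to establish local finiteness of $\cF$ near $0$ (the short trip property being the natural source of this finiteness).
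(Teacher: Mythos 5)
Your main line of attack---bounding $|\cF|$ by $2\ell$ through a re-routing argument and the minimality of $\ell$---has a genuine gap, which you yourself identify: when a putative extra component $\cC'_*$ of $\cF$ lies entirely inside $\inte(D)$ and never meets $\partial D$, there is no boundary arc of $D$ to re-route through it, and it is unclear how to ``iterate $f$ to push $\cC'_*$'' onto $\partial D$ without already knowing something like the periodicity you are trying to prove. There is also a more basic structural worry: the lemma only requires that the $2\ell$ distinguished components $\cC'_i$ (not all of $\cF$) be periodic, and trying to show $|\cF|\le 2\ell$ outright is a much stronger (and quite possibly false) statement; nothing in the hypotheses prevents $W^s(V')\cap W^u(V')\setminus\{0\}$ from having infinitely many components touching $0$ inside small petals.

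Your alternative sketch is closer in spirit to the paper but still misses the key quantitative step. Saying ``the visited components $\cC'_{\sigma^n(i)}$ accumulate at $0$'' carries no information, since by construction \emph{every} member of $\cF$ already has $0$ in its closure. Likewise ``local finiteness of $\cF$ near $0$'' is not a well-posed notion here, again because all elements of $\cF$ meet every neighbourhood of $0$. What a pigeonhole argument actually needs is the opposite direction: that the iterates $f^n(\cC_i)$ keep returning to a fixed compact set \emph{away} from $0$. This is exactly the claim the paper isolates and proves: because $\cC_i\cup\{0\}$ is not a neighbourhood of $0$ (lemma~\ref{lem.simply-connected}), $\partial\cC_i$ accumulates at $0$; because $\cC_i$ is a component of an open set, each $z\in\partial\cC_i\setminus\{0\}$ has some iterate $f^{n}(z)$ on $\partial V$; and as $z\to 0$ the times $n$ must go to infinity, since finitely many iterates of $\partial V$ form a closed set avoiding $0$. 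Each such $f^{n}(\cC_i)$ is connected, its closure contains $0$, and it meets $\partial V$, hence also meets $\partial D$. A limit point $x\in\partial D$ then lies in $W^s(V')\cap W^u(V')$ (its orbit stays in $\adhe V\subset V'$), and the openness of its component $O$ (lemma~\ref{lem.simply-connected}) forces $f^{n}(\cC'_i)=O$ for infinitely many $n$, giving the periodicity. To repair your proposal you would need to supply this ``infinitely many returns to $\partial D$'' argument; without it, neither of your two routes closes.
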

\begin{proof}
We first note that the set  $W^s(V') \cap W^u(V') \setminus\{0\}$ is invariant under $f$,
and hence for every $n$ the set $f^n(\cC'_{i})$ is a connected component of $W^s(V') \cap W^u(V') \setminus\{0\}$.

We  claim that for every $i$ there exist infinitely many $n$ such that $f^n(\cC_{i})$ meets the circle $\partial D$.
Assuming the claim, we choose some limit point $x \in \partial D$ of the sequence $(f^n(\cC_{i}))_{n \in \bbZ}$.
Since the point  $x$ is a limit of points whose whole orbits are included in $V$, its orbit is included in $\adhe(V) \subset V'$:
in other words $x$ belongs to $W^s(V') \cap W^u(V') \setminus\{0\}$. Let $O$ be the connected component of this last set containing $x$.
According to lemma~\ref{lem.simply-connected}, $O$ is open, and thus there exists infinitely many integers $n$ such that $f^n(\cC_{i})$ meets $O$.
For every such integer $n$, the set $f^n(\cC'_{i})$ is a connected component of $W^s(V') \cap W^u(V') \setminus\{0\}$ that meets $O$,
thus it coincides with $O$. Thus we find two integers $n_{1} < n_{2}$ such that $f^{n_{1}}(\cC'_{i})=f^{n_{2}}(\cC'_{i})$, which proves that $\cC'_{i}$ is periodic.

We prove the claim. By lemma~\ref{lem.closure} the fixed point $0$ belongs to the closure of $\cC_{i}$.
Since $\cC_{i}\cup \{0\}$ is not a neighbourhood of $0$, this  point  also belongs to the closure of $\partial \cC_{i}$.
Furthermore, 
$$
\begin{array}{rcl}
\left( \partial \cC_{i} \right) \setminus \{0\}   &  \subset &    \partial \left( W^s(V) \cap W^u(V)  \right)  \setminus \{0\}  \\
  & \subset & \left( W^s(\adhe V) \cap W^u(\adhe V) \right) \setminus  \left( W^s(V) \cap W^u(V) \right).  
\end{array}
$$
Consequently for any $z \in \partial \cC_{i}  \setminus \{0\} $ there exists an integer $n$ such that $f^n(z) \in \partial V$.
Let $(z_{k})$ be a sequence in $\partial \cC_{i}$ converging to $0$, then  any sequence $n_{k}$
such that $f^{n_{k}}(z_{k}) \in \partial V$ is unbounded, 
because the union of finitely many iterates of $\partial V$ is a closed set which does not contain $0$.
For any $k$ the set $f^{n_{k}}(\cC_{i})$ is connected, its closure contains $0$ and meets $\partial V$, thus it also meets $\partial D$.
This completes the proof of the claim.
\end{proof}

\subsection{Construction of the local conjugacy}
We finally define the petals. 
According to the previous lemma we can choose some $n_{0}>0$ such that $F=f^{n_{0}}$ leaves invariant every set $\cC'_{i}$. In view of proposition~\ref{prop.carac2}, theorem~\ref{theo.main}  will follow from the fact that $F$ is locally conjugate to a locally holomorphic parabolic homeomorphism. 
Let us prove this fact.

Recall that $\cC'_{i}$ is homeomorphic to the plane (lemma~\ref{lem.simply-connected}), and 
for any compact set $K \subset \cC'_{i}$, the sequence $(f^{n}(K))_{n \geq 0}$ converges to $0$ (lemma~\ref{lem.orbits}).
Consequently   theorem~\ref{theo.kerek} tells us that for every $i$ the restriction of $F$ to the invariant set $\cC'_{i}$ is conjugate to the plane translation $z \mapsto z+1$. Picking a horizontal line and bringing it back  under the conjugacy, we see that
 the point $x_{i}$ of $\cC'_{i}$ is on  a topological line $\Delta_{i} \subset \cC'_{i}$ such that  $F(\Delta_{i})=\Delta_{i}$.
Let $P_{i}$ be the closed topological disc bounded by the curve $\Delta_{i}\cup\{0\}$ (here we use Schoenflies theorem).
It is clear that  $P_{i}$ is a regular  invariant petal for $F$.

The curve $\alpha_{i}$, defined at the beginning of section~\ref{ss.petals}, meets both petals $P_{i}$ and $P_{i+1}$.
Furthermore for odd $i$ we have  $\alpha_{i} \subset W^s(V)$ so by lemma~\ref{lem.orbits} the sequence $(F^n(\alpha_{i}))_{n \geq 0}$  converges to $0$, and the sequence $(F^n(\alpha_{i}))_{n \leq 0}$  converges to $0$ for even~$i$. Thus the construction of a local conjugacy between $F$ and $z \mapsto z(1+z^\ell)$ now boils down to the following lemma.

\begin{lemma}[see figure~\ref{fig1}]
\label{lemma.carac1bis}
Let $f \in \hp$. Fix some integer $\ell \geq 1$. Assume the following hypotheses.
\begin{enumerate}
\item There exist $2\ell$ regular invariant petals $P_{1}, \dots , P_{2\ell}$,
whose pairwise intersections are reduced to $\{0\}$.
\item There exists a topological closed disc $D$ which is a neighbourhood of $0$, 
and whose boundary is the concatenation of $2\ell$ arcs $\alpha_{1}, \dots , \alpha_{2\ell}$,
each arc $\alpha_{i}$ having one endpoint on $P_{i}$ and the other one on $P_{i+1}$.
\item For odd $i$ the sequence $(f^n(\alpha_{i}))_{n \geq 0}$ converges to $0$, and for even $i$
the sequence $(f^n(\alpha_{i}))_{n \leq 0}$ converges to $0$.
\end{enumerate}
 Then $f$ is locally conjugate to $z \mapsto z(1+z^{\ell})$.
\end{lemma}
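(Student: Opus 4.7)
My plan is to verify the hypotheses of Proposition~\ref{prop.carac1} on a smaller disc $D'$ built from sub-petals of the $P_i$'s and from attracting and repulsive sectors between them, and then to apply that proposition. As a first step I orient each petal using Claim~\ref{claim1}: in the translation model, $\partial P_i \setminus \{0\}$ is a topological line without fixed points, with two ends approaching $0$, one called the \emph{forward end} (where $f^n(x)\to 0$) and the other the \emph{backward end} (where $f^{-n}(x)\to 0$). Hypothesis~(3) of the lemma forces the endpoint of $\alpha_i$ on $\partial P_i$, as well as its endpoint on $\partial P_{i+1}$, to lie on a forward end when $i$ is odd, and on a backward end when $i$ is even; in particular, on each $\partial P_i$ the endpoints contributed by $\alpha_{i-1}$ and $\alpha_i$ always fall on opposite ends.

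Next I build the sectors. Shrink each $P_i$ to a sub-petal $P'_i$ with the same germ at $0$, lying inside $D$. Near $0$ pick $a_i \in \partial P'_i$ and $b_i \in \partial P'_{i+1}$ on the ends dictated by the previous step, and let $\alpha'_i$ be a simple arc joining $a_i$ to $b_i$ whose forward (resp.\ backward) iterates still converge to $0$: for instance the concatenation of a short sub-arc of $\partial P'_i$ near $a_i$, a sub-arc of the original $\alpha_i$, and a short sub-arc of $\partial P'_{i+1}$ near $b_i$. Let $G_i$ be the closed topological disc bounded by $\alpha'_i$ together with the sub-arcs of $\partial P'_i$ and $\partial P'_{i+1}$ joining $a_i$ and $b_i$ to $0$. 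For odd $i$, the dynamical order on each petal boundary shortens its sub-arc in $\partial G_i$ (since $a_i, b_i$ lie on forward ends), and $f^n(\alpha'_i)\to 0$ by construction; hence $f(G_i) \overset{0}{=} G_i$, while $\partial G_i$ shrinks to $0$ under forward iteration and a Jordan-curve argument inside $D$ yields $f^n(G_i)\to 0$. Thus $G_i$ is an attracting sector; even $i$ is symmetric.

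Then Claim~\ref{claim2}(1) furnishes a nice sub-sector $S_i \subset G_i$ with the same germ at $0$, and Remark~\ref{rema.sector}(2)--(3) lets me prescribe the sides of $S_i$ along $\partial P'_i$ and $\partial P'_{i+1}$ with the side-endpoints arbitrarily close to $0$. By the end-parity from the first paragraph, on each $\partial P'_j$ the two endpoints contributed by $S_{j-1}$ and $S_j$ automatically fall on opposite ends. Set $D' = \bigcup_i P'_i \cup \bigcup_i S_i$. A short check of the gluing shows $D'$ is a closed topological disc neighbourhood of $0$ whose boundary alternates between outer arcs of the $P'_i$ and outer arcs $\alpha'_i$ of the $S_i$. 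Conditions~(2) and~(3) of Proposition~\ref{prop.carac1} hold by construction, and for~(1) the inclusion $\bigcup_i P'_i \subseteq$ (maximal invariant set of $D'$) is clear while, conversely, a point $x$ in the interior of some $S_i$ is transported by $f^{\mp 1}$ (according to the parity of $i$) through the outer arc of $S_i \subset \partial D'$, by the half-disc model of Claim~\ref{claim2}(2), and hence leaves $D'$. Proposition~\ref{prop.carac1} now delivers the required local conjugacy between $f$ and $z \mapsto z(1+z^{\ell})$.

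The main obstacle is the second paragraph: the translation model on each petal and the convergence of the $\alpha_i$'s must be combined carefully to establish \emph{simultaneously} the germ-invariance $f(G_i)\overset{0}{=} G_i$ and the convergence $f^n(G_i)\to 0$; each of the three pieces of $\partial G_i$ has the desired behaviour for a different reason (dynamical-order shortening on the petal boundaries, hypothesis on $\alpha'_i$), and one must check that these phenomena combine to control the whole sector $G_i$ and not just its boundary.
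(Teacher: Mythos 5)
Your overall strategy is the same as the paper's: build attracting/repulsive sectors between consecutive petals, pass to nice sub-sectors via Claim~\ref{claim2}, glue petals and sectors into a disc, and apply Proposition~\ref{prop.carac1}. The observation in your first paragraph about forward/backward ends is also in the spirit of the paper's use of the dynamical orientation (Remark~\ref{rem.orientation}). However, there is a genuine gap, and it is precisely the step the paper singles out in the sentence preceding the proof: \emph{``An important step of the proof will be to check the not obvious fact that the petals indexation coincide with their cyclic order around $0$.''}

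The hypotheses of the lemma do \emph{not} assert that $P_1,\dots,P_{2\ell}$ are arranged around $0$ in the order of their indices: the arcs $\alpha_i$ are allowed to cross other petals, so $\alpha_i$ may wander across several petals before reaching $P_{i+1}$, and a priori $P_i$ and $P_{i+1}$ could be locally separated near $0$ by some $P_j$. Your second and third paragraphs tacitly assume adjacency: you describe $G_i$ as a disc bounded by $\alpha'_i$ and two petal-boundary arcs, implicitly taking the bounded component and implicitly assuming no other petal lies between $P'_i$ and $P'_{i+1}$; and ``a short check of the gluing'' that $D'$ is a topological disc with boundary arcs in the index order is exactly the statement that the cyclic order of the petals around $0$ matches the indices. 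Without this fact, the gluing may fail (a third petal would be trapped inside your $G_i$), and condition~2 of Proposition~\ref{prop.carac1} cannot hold. The paper establishes this order by contradiction: if $P_i$ and $P_{i+1}$ were not adjacent, the nice sector $S_i$ would contain a neighbourhood of $0$ in some other petal $P_j$; the translation model on $P_j$ (Claim~\ref{claim1}) then produces a point whose full orbit stays in $P_j\cap S_i$, contradicting the fact that a nice sector, being conjugate to $z\mapsto z/2$ on a half-disc (Claim~\ref{claim2}), contains no full orbit. Your proof needs an argument of this type --- in fact, the Jordan-curve argument you invoke to show $f^n(G_i)\to 0$ could be pressed into service here (if $G_i$ contained a neighbourhood of $0$ in another invariant petal $P_j$, then $f^n(G_i)$ could not converge to $0$), but you never state this, and in the form you wrote it the cyclic order is assumed rather than proved.

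A secondary but related issue: to obtain a Jordan curve $\partial G_i$, you need the connecting arc to have interior disjoint from the two petals. Your description of $\alpha'_i$ as a concatenation of a sub-arc of $\partial P'_i$, a sub-arc of $\alpha_i$, and a sub-arc of $\partial P'_{i+1}$ does not ensure this, since $\alpha_i$ may meet $P_i$, $P_{i+1}$ (or their shrunk versions) many times. The paper handles this by first extracting from $\alpha_i$ a \emph{minimal} sub-arc connecting $P_i$ to $P_{i+1}$, with interior disjoint from both petals. Similarly, after the cyclic order is established, the paper still shrinks the nice sectors so that $P_i\cap S_j=\{0\}$ for $j\neq i,i+1$ and $S_i\cap S_j=\{0\}=f^{-1}(S_i)\cap S_j$ before forming the union $D$; your construction would need an analogous shrinking to guarantee that the pieces of $D'$ meet only along the intended boundary arcs and $\{0\}$.
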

Note that we do not suppose that $\partial D \cap P_{i}$ is a connected set, nor that $\alpha_{i}$  does not meet some petal $P_{j}$ with $j \neq i,i+1$, contrarily to proposition~\ref{prop.carac1}.
 An important step of the proof will be to check  the not obvious fact that the petals indexation coincide with their cyclic order around $0$.

\begin{proof}[Proof of lemma~\ref{lemma.carac1bis}]
 Consider a homeomorphism  $f\in \hp$ satisfying the hypotheses of the lemma.
The arc $\alpha_{i}$  contains a minimal  sub-arc $\alpha'_{i}$ connecting $P_{i}$ to $P_{i+1}$:
the endpoints of $\alpha'_{i}$  are respectively on  $P_{i}$ and $P_{i+1}$, and its interior $\inte(\alpha'_{i})$ is disjoint from $P_{i}$ and $P_{i+1}$.
Let $i$ be odd, so that the sequence $(f^n(\alpha'_{i}))_{n \geq 0}$ converges to $0$.
Then we define an attracting sector $S'_{i}$ as follows.
We consider the curve obtained by concatenating 
 the arc $\alpha'_{i}$, the sub-arc of $\partial P_{i}$ from the endpoint of $\alpha$ to $0$ following the dynamical orientation of $\partial P_{i}$, and the similar sub-arc on $P_{i+1}$ (see remark~\ref{rem.orientation}).
This curve is clearly a Jordan cuve, it bounds the topological closed disc $S'_{i}$.
It is not difficult to see that $S'_{i}$ is indeed an attracting sector. We apply item 1 of claim~\ref{claim2} to get a nice attracting sector $S_{i} \subset S'_{i}$ having the same germ as $S'_{i}$ at $0$.
For even $i$ we   symmetrically define a repulsive sector $S'_{i}$ and a nice repulsive sector $S_{i}$.

Since the petals are topological closed discs whose pairwise intersection is reduced to $\{0\}$, Schoenflies theorem can be used to prove that the union of the petals is homeomorphic to the model pictured on the left side of  figure~\ref{fig1}; but 
we still have to prove that their cyclic order is as shown on the right side of the figure (or the reverse one).
For this we argue by contradiction. Suppose there exists some $i$ such that the petals $P_{i}$ and $P_{i+1}$ are not adjacent: they are locally separated near $0$  by the union of the other petals. Then there exists another petal $P_{j}$ such that the sector
  $S_{{i}}$  contains a neighbourhood of $0$ in $P_{j}$; in other words the set
 $\adhe(P_{j} \setminus S_{{i}})$ is a compact subset of $P_{j} \setminus \{0\}$.
 Using claim~\ref{claim1} that describes the dynamics of $f$ on $P_{j}$,
we find a point $x \neq 0$ whose full orbit $\{f^n(x),n \in \bbZ\}$ is included in $P_{j} \cap S_{i}$. But due to item 2 of claim~\ref{claim2} 
 a nice attractive or repulsive sector  contains no full orbit, which provides the contradiction.

Up to reversing the indexation, we may now assume that the petals are indexed in the positive cyclic order around $0$ (so that Schonflies theorem provides an \emph{orientation preserving} homeomorphism that sends each $P_{i}$ on the model of figure~\ref{fig1}).
Note that the same argument as above, applied to $P_{i}$ and $P_{i+1}$ instead of $P_{j}$,
 shows that the interior of $S_{i}$ is disjoint from $P_{i}$ and $P_{i+1}$,
and in particular the dynamical order on the boundaries of the petals is as indicated on figure~\ref{fig1}: the petal $P_{i}$ is direct for odd $i$ and indirect for even $i$.

Up to replacing $S_{i}$ with some smaller nice sector, we get that 
\begin{enumerate}
\item for any $i,j$ with $j \neq i,i+1$, we have  $P_{i} \cap S_{j}=\{0\}$;
\item for any $i \neq j$, we have $S_{i} \cap S_{j}=\{0\} = f^{-1}(S_{i}) \cap S_{j}$.
\end{enumerate}
Consider the set $D = P_{1} \cup S_{1} \cup \cdots \cup P_{2\ell} \cup S_{2\ell}$.
Thanks to item 2 the maximal invariant set of $D$ is the union of the petals $P_{i}$.
Thus $D$ is a topological closed disc  satisfying the hypotheses of proposition~\ref{prop.carac1}. Now  the lemma follows from the proposition.
\end{proof}

\section{Proof of propositions~\ref{prop.carac1} and~\ref{prop.carac2}}
\label{ss.proof}
\begin{proof}[Proof of proposition~\ref{prop.carac1}]
The fact that for the map $z \mapsto z(1+z^{\ell})$ there exists a topological closed disc $D$ satisfying properties 1,2,3 of the proposition is  part of the proof of Camacho-Leau-Fatou theorem
(see~\cite{Cam,Mil}).

We turn to the proof of the reverse implication.
We consider a homeomorphism  $f\in \hp$ and a disc $D$ satisfying properties 1,2,3 of the proposition.
We have to prove that if $f'\in \hp$ and a disc $D'$  satisfies the same properties (with the same number $\ell$) then $f$ and $f'$ are locally topologically conjugate. Note that the union of all sectors $S_{i}$ and petals $P_{i}$ is   equal to $D$.

Let $i$ be an odd integer. Since $S_{i}$ is an attracting sector between $P_{i}$ and $P_{i+1}$, the petal $P_{i}$ is direct, while the petal $P_{i+1}$ is indirect (see remark~\ref{rem.orientation}).
The same is true for $f'$. Thus  according to claim~\ref{claim1}, the restriction of $f$ and $f'$ to $P_{i}$ and $P'_{i}$ are conjugate. The conjugacies can be glued together to obtain an orientation preserving homeomorphism $\Phi : \cup P_{i} \ra \cup P'_{i}$ which sends $P_{i}$ onto $P'_{i}$ and  is a conjugacy between the restrictions of $f$ and $f'$.
 

The image under $\Phi$ of $S_{i}\cap \left(P_{i}\cup P_{i+1}\right)$ is not necessarily equal to $S'_{i}\cap \left(P'_{i}\cup P'_{i+1}\right)$.
But  using item~2 of claim~\ref{claim2} we  can replace $S_{i}$ and $S'_{i}$
with smaller nice attracting sectors so that this equality becomes true (see item 2 of remark~\ref{rema.sector}).
We can now use item~3 of remark~\ref{rema.sector} to
extend $\Phi$ to a homeomorphism between $D$ and $D'$, sending $S_{i}$ onto $S'_{i}$ 
and conjugating the restrictions of $f$ and $f'$. We do this for every attracting or repulsive sectors $S_{i}$.
We further extend $\Phi$ to a homeomorphism of the plane.
The conjugacy relation $f' \Phi= \Phi f$ is satisfied on $D\cap f^{-1}(D)$.
This completes the proof of the proposition.
\end{proof}

To prove proposition~\ref{prop.carac2} we need a claim.
\begin{claim}\label{claim3}
Let $Q_{1},Q_{2}$ be two invariant petals included in a regular invariant petal $P$ for $F \in \hp$. Then $Q_{1}$ meets $Q_{2}$, and there exists a unique connected component $O$ of $\inte (Q_{1}) \cap \inte (Q_{2})$ such that $F(O) = O$. Furthermore, the closure of $O$ is a regular invariant petal for $F$.
\end{claim}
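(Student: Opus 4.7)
The plan is to work within the translation model provided by Claim~\ref{claim1}: since $P$ is a direct regular invariant petal (the indirect case is symmetric), there is a homeomorphism $\varphi:P\to\bar H$, with $\bar H:=\{y\geq 0\}\cup\{\infty\}\subset\hat{\bbC}$, conjugating $F|_P$ to the translation $T(z)=z+1$. I identify via $\varphi$ throughout, so that $P=\bar H$, $F=T$, and $0=\infty$. Each $Q_i$ then becomes a $T$-invariant topological closed disc in $\bar H$ with $\infty\in\partial Q_i$; it is automatically regular, since any compact $K\subset Q_i\setminus\{0\}$ lies in $P\setminus\{0\}$ and $P$ is regular.

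First I will show that each $Q_i$ contains a horizontal half-plane at infinity: there is $M_i>0$ with $\{y\geq M_i\}\cup\{\infty\}\subset Q_i$. The curve $\ell_i:=\partial Q_i\setminus\{\infty\}$ is a $T$-invariant topological line in $\{y\geq 0\}$, whose image in the cylinder $C:=\{y\geq 0\}/T$ is a simple essential closed curve, hence compact, giving $\ell_i\subset\{y\leq M_i\}$. The Jordan curve $\ell_i\cup\{\infty\}$ separates $\bar H$ into an upper component, whose closure is a topological disc bounded by this very Jordan curve, and a lower component, whose closure in the ambient sphere has boundary the figure-eight $\ell_i\cup\{y=0\}\cup\{\infty\}$ and is therefore not a topological disc. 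Thus $Q_i$ is the upper region and contains the desired strip. Consequently $\inte(Q_1)\cap\inte(Q_2)\supset\{y>M\}$, where $M:=\max(M_1,M_2)$, showing that $Q_1$ meets $Q_2$; I define $O$ as the (automatically $T$-invariant) connected component of $\inte(Q_1)\cap\inte(Q_2)$ containing $\{y>M\}$.

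For uniqueness, suppose $O'$ is another $T$-invariant component of $\inte(Q_1)\cap\inte(Q_2)$ and pass to $C$. Setting $\bar Q_i:=\pi(Q_i\setminus\{\infty\})$, the images $\pi(O)$ and $\pi(O')$ are two disjoint connected components of $\inte(\bar Q_1)\cap\inte(\bar Q_2)$, and both are essential in $C$ because $T$-invariance makes $\pi|_O$ and $\pi|_{O'}$ nontrivial $\bbZ$-covers. Choose essential simple loops $\gamma\subset\pi(O')$ and $\gamma_\infty:=\{y=M+1\}\subset\pi(O)$: these are two disjoint essential loops in $C$ and cobound a unique closed annulus $A\subset C$. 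Moreover, each $\inte(\bar Q_i)$ is itself an open annulus containing both $\gamma$ and $\gamma_\infty$, and the unique sub-annulus of $\inte(\bar Q_i)$ cobounded by them must coincide with $A$. Hence $\inte(A)\subset\inte(\bar Q_1)\cap\inte(\bar Q_2)$, placing $\gamma$ and $\gamma_\infty$ in the same connected component---contradicting $\pi(O)\neq\pi(O')$.

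Finally, I will check that $\adhe(O)$ is a regular invariant petal. $F$-invariance is immediate, the point $\infty=0$ lies in $\adhe(O)\setminus O$ and hence on $\partial\adhe(O)$, and regularity is inherited from $P$ since $\adhe(O)\subset P$. The main obstacle is to show $\adhe(O)$ is a topological closed disc. My strategy is to identify $\partial O\cap\{y\geq 0\}$ as a single $T$-invariant topological line $\ell$ assembled from arcs of $\ell_1$ and $\ell_2$---essentially the ``lower envelope'' of $\ell_1\cup\ell_2$ as seen from $\infty$---so that $\ell\cup\{\infty\}$ is a Jordan curve in $\bar H$ whose upper side is $\adhe(O)$. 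Constructing $\ell$ rigorously requires a careful surface-topology argument about how the simple closed curves $\bar\ell_1,\bar\ell_2$ intersect in $C$ and how the boundary of the essential component $\pi(O)$ is extracted from $\bar\ell_1\cup\bar\ell_2$; this step, rather than the existence or uniqueness arguments, is the most delicate part of the proof.
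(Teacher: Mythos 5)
Your treatment of the first half of the claim --- that $Q_1$ meets $Q_2$, and that there is a unique $F$-invariant component $O$ of $\inte(Q_1)\cap\inte(Q_2)$ --- is carried out along the same lines the paper intends (pass to the translation model of Claim~\ref{claim1}); the strip-at-infinity and annulus arguments you supply are reasonable fleshings-out of what the paper only calls ``easily proved.'' The problem is the final step.

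You openly leave unproved the one nontrivial assertion, namely that $\adhe(O)$ is a topological closed disc, deferring it to a ``lower envelope'' construction that you yourself flag as the most delicate part and do not carry out. This is a genuine gap, and moreover the envelope strategy is shakier than it looks: $\partial Q_1$ and $\partial Q_2$ are arbitrary Jordan curves and may intersect in a totally disconnected, Cantor-like set, so there is no reason for $\partial O$ to decompose into finitely many sub-arcs of $\ell_1$ and $\ell_2$, nor for a ``lowest'' boundary curve to be extractable by elementary surgery. What is actually needed here is the theorem of K\'er\'ekj\'art\'o already invoked in the proof of Proposition~\ref{prop.contraction}: the closure of any connected component of the intersection of two Jordan domains in the sphere is again a closed topological disc (see~\cite{Ke2, LCY}). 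The paper's proof of the present claim consists of exactly this one appeal. Replacing your envelope sketch by a direct citation of that result (embedding $\bar H$ in $S^2$ and applying it to $\inte(Q_1)$, $\inte(Q_2)$) closes the gap; once $\adhe(O)$ is known to be a closed disc with $0$ on its boundary, invariance and regularity are immediate, as you note.
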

\begin{proof}
The first part is easily proved using the translation model given by claim~\ref{claim1}.
The only difficulty in the second part consists in checking that the closure of $O$ is indeed a topological closed disc. But this follows from a previously quoted result of K\'er\'ekj\'art\'o (\cite{Ke2, LCY}).
\end{proof}
Also note that if $Q \subset P$ are two regular invariant petals and $P$  is direct then $Q$ is direct.

\begin{proof}[Proof of proposition~\ref{prop.carac2}]
Let $f^{n_{0}} = F$ be conjugate to a  parabolic homeomorphism $F_{0}$. 
Up to increasing $n_{0}$, we can assume that 
$F_{0}'(0) = 1$, and thus  $F$ is locally conjugate to $z \mapsto z(1 + z^{\ell})$ for some integer $\ell$.
Let $D$ be a nice disc for $F$, and 
let  $\{P_{1}, \dots , P_{2\ell}\}$ be the family of petals associated with $D$, as given by proposition~\ref{prop.carac1}.

For each $i$ we choose a small invariant petal  $Q_{i}$ for $F$ included in $P_{i}$.
Since $f^{n_{0}}=F$ and $Q_{i}$ is invariant for $F$, if $Q_{i}$ is small enough then every iterates  $f^n(Q_{i})$ is included in $D$. Since $f^n(Q_{i})\setminus\{0\}$ is connected and invariant for $F$, it is included in a connected component of the $F$-maximal invariant set of $D \setminus \{0\}$, that is, 
$f^n(Q_{i})$ is included in some petal $P_{j}$.
Let us fix $j$ and consider the finite family of all the petals $f^n(Q_{i})$ for  $n \in \bbZ, i \in \bbZ/2\ell\bbZ$ which are included in $P_{j}$. We denote the intersection of their interiors by $O_{j}$.
Applying claim~\ref{claim3} inductively we see that the closure of $O_{j}$ is a regular invariant petal for $F$, let us call it $\bar{P_{j}}$.

By construction the petals in the family $\{\bar{P_{j}}\}$ are permuted by $f$, their pairwise intersections are reduced to $0$, and their cyclic order around $0$ is given by the cyclic order on the indices $i \in \bbZ/2\ell\bbZ$.
Since $f$ is an orientation preserving homeomorphism there exists $i_{0}$ such that for every $i$, $f(\bar P_{i})=\bar P_{i+i_{0}}$. Furthermore, since $f$ respects the dynamical orders  induced by $F$ on the boundary of the petals, $i_{0}$ must be even.
The order $n'_{0}$ of the permutation $i \mapsto i+i_{0}$ is a divisor of $n_{0}$ (maybe strict).
It is easy to see that there exists  another nice disc $\bar D$ for $F$ whose maximal invariant set is the union of this family of petals. The nice attractive and repulsive sectors $\bar S_{i}$ for $F$, associated with $\bar D$, are clearly also  attractive or repulsive sectors for $f^{n'_{0}}$, and according to claim~\ref{claim2} we can find within each $\bar S_{i}$ a nice attracting or repulsive sector $\bar{\bar{S_{i}}}$ for $f^{n'_{0}}$ having the same germ at $0$. Now the topological closed disc $\bar{\bar{D}}$ defined as the union of all petals $\bar P_{i}$ and sectors $\bar{\bar{S_{i}}}$ is a nice disc for $f^{n'_{0}}$, the hypotheses of proposition~\ref{prop.carac1} are satisfied, and $f^{n'_{0}}$ is conjugate to $F$.

Using these families of petals and sectors we are now in a position to
construct a local conjugacy $\Phi$ between $f$ and the model  map
 $f_{0} : z \mapsto  e^{2i\pi \frac{i_{0}}{2\ell}}z(1 + z^{\ell})$. Note that $f_{0}^{n_{0}'}$ is conjugate to  $z \mapsto z(1 + z^{\ell})$ and that 
 $f_{0}$ permutes a family of regular invariant petals for $f_{0}^{n_{0}'}$.
 The construction of the conjugacy is similar 
 to the one defined in the proof of proposition~\ref{prop.carac1}. Here is the main difference:  
 since the petals are permuted by $f$, we have first to define  a conjugacy $\Phi$ between $f^{n'_{0}}$ and $f_{0}^{n'_{0}}$ on some petal $\bar P_{i}$, and then there is a unique way to  extend it to the $f$-orbit of $\bar P_{i}$ to get a conjugacy between $f$ and $f_{0}$.
 We do the same for every $f$-orbit of petals, and for every $f$-orbit of sector. This completes the proof of proposition~\ref{prop.carac2}.
  \end{proof}
 
%
%
%
%


\end{document}